\newcommand{\G}{\mathbb{G}}
\renewcommand{\P}{\mathbb{P}}
\newcommand{\Q}{\mathbb{Q}}
\newcommand{\Z}{\mathbb{Z}}
\newcommand{\Hom}{\operatorname{Hom}}
\newcommand{\Coker}{\operatorname{Coker}}
\newcommand{\coker}{\operatorname{coker}}
\newcommand{\Spec}{\operatorname{Spec}}
\newcommand{\Jac}{\operatorname{Jac}}
\newcommand{\Tor}{{\operatorname{Tor}}}
\newcommand{\Div}{\operatorname{Div}}
\newcommand{\ord}{\operatorname{ord}}
\newcommand{\Res}{\operatorname{Res}}
\newcommand{\Gal}{\operatorname{Gal}}
\newcommand{\SL}{\operatorname{SL}}
\newcommand{\ab}{{\operatorname{ab}}}
\newcommand{\geo}{{\operatorname{geo}}}
\newcommand{\ur}{{\operatorname{ur}}}
\newcommand{\uab}{{\operatorname{uab}}}
\newcommand{\cyc}{{\operatorname{cyc}}}
\renewcommand{\lim}{\operatornamewithlimits{\varprojlim}}
\newcommand{\colim}{\operatornamewithlimits{\varinjlim}}
\renewcommand{\deg}{\operatorname{deg}}
\newcommand{\ol}{\overline}
\renewcommand{\epsilon}{\varepsilon}
\renewcommand{\div}{{\operatorname{div}}}
\newcommand{\tJ}{{\widetilde{J}}}
\newcommand{\ul}{\underline}
\numberwithin{equation}{section}
\def\SL{\mathrm{SL}}
\def\M#1#2#3#4{\begin{pmatrix}#1&#2\\#3&#4\end{pmatrix}}
\def\SM#1#2#3#4{\left(\begin{smallmatrix}#1&#2\\#3&#4\end{smallmatrix}\right)}
\def\JS#1#2{\left(\frac{#1}{#2}\right)}
\newtheorem{lemma}{Lemma}[subsection]
\newtheorem{theorem}[lemma]{Theorem}
\newtheorem{proposition}[lemma]{Proposition}
\newtheorem{Proposition}[lemma]{Proposition}
\newtheorem{Lemma}[lemma]{Lemma}
\newtheorem{definition}[lemma]{Definition}
\theoremstyle{remark}
\newtheorem{remark}[lemma]{Remark}
\newtheorem{Remark}[lemma]{Remark}
\newtheorem{example}[lemma]{Example}
\numberwithin{equation}{subsection}
\begin{document}
\title[Maximal abelian extension of $X_0(p)$]
{Maximal abelian extension of $X_0(p)$ 
unramified outside cusps}

\author{Takao Yamazaki \and Yifan Yang}
\date{\today}
\address{Mathematical Institute, Tohoku University,
Aoba, Sendai 980-8578, Japan}
\email{ytakao@math.tohoku.ac.jp}
\address{Department of Mathematics, 
National Taiwan University and National Center for Theoretical
Sciences, Taipei, Taiwan 10617}
\email{yangyifan@ntu.edu.tw}

\begin{abstract}
Let $p$ be a prime number.
Mazur proved that a geometrically maximal unramified abelian covering of $X_0(p)$
over $\Q$ 
is given by the Shimura covering $X_2(p) \to X_0(p)$,
that is, a unique subcovering of $X_1(p) \to X_0(p)$
of degree $N_p := (p-1)/\gcd(p-1, 12)$.
In this short paper,
we show that a geometrically maximal abelian covering 
$X_2'(p) \to X_0(p)$ of $X_0(p)$ over $\Q$
unramified outside cusps is cyclic of degree $2N_p$.
The main ingredient for the construction of $X_2'(p)$ 
is the generalized Dedelind eta functions.
\end{abstract}

\keywords{Unramified abelian covering, modular curve, generalized Jacobian}
\subjclass[2010]{11G18 (11F03, 11G45, 14G35)}
%\thanks{The first author is supported by JSPS KAKENHI Grant
%  (18K03232). The second author is supported by Grant
%  102-2115-M-009-001-MY4 of the Ministry of Science and Technology,
%  Taiwan (R.O.C.)}
\maketitle

\section{Introduction}
\subsection{Main result}
Let $p$ be a prime.
We consider the modular curves 
$X_0(p)$ and $X_1(p)$ 
as geometrically integral smooth proper curves over $\Q$.
We choose a model of $X_1(p)$ over $\Q$
such that the cusp at infinity splits completely
in a finite cyclic covering $f_p : X_1(p) \to X_0(p)$ 
of degree $(p-1)/2$.
Note that $f_p$ is possibly ramified.
We denote by $\ul{f}_p : X_2(p) \to X_0(p)$
its (unique) subcovering of degree $N_p :=(p-1)/\gcd(p-1, 12)$,
which is the maximal unramified subcovering of $f_p$,
called the {\it Shimura covering}.
We recall an important result due to Mazur.
\begin{theorem}[{\cite[Theorem 2]{Mazur}}]\label{thm:mazur}
The Shimura covering
$\ul{f}_p : X_2(p) \to X_0(p)$ is geometrically maximal
over $\Q$.
\end{theorem}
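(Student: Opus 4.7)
The natural approach is to translate the problem into finite subgroups of the Jacobian $J_0(p) := \Jac(X_0(p))$ and then apply Mazur's deep theorem on the structure of $J_0(p)(\Q)_{\text{tors}}$.

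First, using the rational cusp $\infty \in X_0(p)(\Q)$ as base point, define the Albanese embedding $\iota : X_0(p) \hookrightarrow J_0(p)$ over $\Q$. The heart of the proof is the following correspondence, provided by geometric class field theory together with a descent argument: isomorphism classes of geometrically connected unramified cyclic coverings $Y \to X_0(p)$ defined over $\Q$ and equipped with a $\Q$-rational point above $\infty$ are in bijection with cyclic subgroups of $J_0(p)(\Q)_{\text{tors}}$. Concretely, the cover associated to $H \subset J_0(p)(\Q)$ of order $n$ is the connected unramified cover of $X_0(p)$ of degree $n$ obtained by pulling back along $\iota$ the isogeny $J_0(p) \to J_0(p)/H$ (up to canonical duality between $H$ and the Galois group).

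Second, I would invoke Mazur's theorem on the Eisenstein ideal, which asserts
\[
J_0(p)(\Q)_{\text{tors}} \;=\; C,
\]
where $C$ is the cuspidal subgroup generated by the class $[(0) - (\infty)]$. That $C$ is cyclic of order $N_p$ is classical and follows from the modular unit $u := \Delta(\tau)/\Delta(p\tau)$, whose divisor on $X_0(p)$ equals $N_p \cdot \bigl((0) - (\infty)\bigr)$ (up to sign); this shows $[(0) - (\infty)]$ has order exactly $N_p$ in $J_0(p)(\Q)$.

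Third, I would identify the Shimura covering $\ul{f}_p: X_2(p) \to X_0(p)$ with the cover associated to $C$ under the correspondence above. Indeed, adjoining $u^{1/N_p}$ to the function field of $X_0(p)$ defines a cyclic extension of degree $N_p$, unramified on $X_0(p)$ (since $\div(u)$ is $N_p$-divisible and $u$ is supported only on the cusps), and hence a $\Q$-rational unramified cyclic cover of degree $N_p$. Uniqueness of the maximal unramified subcover of $f_p : X_1(p) \to X_0(p)$ then identifies this cover with $\ul{f}_p$.

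The main obstacle is the precise statement and proof of the descent correspondence in the first paragraph. A $\Gal(\ol{\Q}/\Q)$-stable cyclic subgroup $H \subset J_0(p)(\ol{\Q})$ of order $n$ always yields a $\Q$-form of the corresponding \'etale cover, but geometric connectedness of the $\Q$-form combined with the requirement of a $\Q$-rational point above $\infty$ forces the points of $H$ to themselves be $\Q$-rational (a consequence of Hilbert~90 applied to the fiber $H$-torsor at $\infty$). Once this rigidity is in place, Mazur's torsion theorem bounds the degree of any geometrically connected unramified abelian $\Q$-cover of $X_0(p)$ by $|J_0(p)(\Q)_{\text{tors}}| = N_p$, a bound achieved by the Shimura covering.
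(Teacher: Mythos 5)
The paper does not actually prove this statement: it is quoted as Theorem~2 of Mazur's Eisenstein ideal paper, whose content, in the language of \S\ref{sec:covering}, is precisely that the maximal $\mu$-type subgroup $J_\Tor^\mu$ of $J=\Jac(X_0(p))$ equals the Shimura subgroup, cyclic of order $N_p$. Your proposal instead tries to derive the statement from a \emph{different} theorem of Mazur, namely $J_0(p)(\Q)_{\mathrm{tors}}=C$ (the cuspidal subgroup), and this substitution does not work.

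The gap is in your first paragraph. Geometric class field theory identifies $\pi_1(\ol{X})^\ab$ with $TJ$, and a geometrically connected unramified covering of $X_0(p)$ over $\Q$ whose deck group is the \emph{constant} group $\Z/n\Z$ corresponds to a Galois-equivariant surjection $TJ\to\Z/n\Z$ with trivial action on the target; by the Weil pairing $(TJ)^*\cong J_\Tor(-1)$, such surjections correspond to order-$n$ elements of $J_\Tor(-1)^{G_\Q}$, i.e.\ to cyclic subgroups $K\subset J_\Tor$ on which Galois acts through the cyclotomic character ($\mu$-type), \emph{not} to subgroups of $J(\Q)_{\mathrm{tors}}$. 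Equivalently, the covering attached to $K\subset J$ is the pullback of an isogeny $A\to J$ whose kernel is the Cartier dual $\Hom(K,\G_{m,k})$, and that kernel is constant exactly when $K$ is of $\mu$-type. So the relevant input is Mazur's Theorem~2 ($J_\Tor^\mu=$ Shimura subgroup), which is the very theorem being quoted, not the rational torsion theorem ($J(\Q)_{\mathrm{tors}}=C$). These are genuinely different subgroups: $C$ is constant and corresponds to the Kummer covering obtained by adjoining $(\Delta(\tau)/\Delta(p\tau))^{1/N_p}$, whose geometric deck group is $\mu_{N_p}$ as a Galois module; that covering is not the Shimura covering and is far from geometrically maximal in the sense of Definition~\ref{def:geo-max-cov}, since its deck group over $\Q$ has order at most $2$. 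The Hilbert~90 step also fails: the pullback of any isogeny $A\to J$ along the Albanese map based at $\infty$ automatically carries the rational point $0_A$ above $\infty$, so the existence of such a point imposes no constraint on the Galois module structure of the kernel. In short, your argument addresses (at best) the cuspidal subgroup, whereas the theorem concerns the dual, $\mu$-type object, and the latter is not a formal consequence of the former.
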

Here, we say a finite unramified abelian covering
$Y' \to Y$ of smooth geometrically integral curves over $\Q$
is {\it geometrically maximal} over $\Q$
if any finite unramified abelian covering of $Y$
is isomorphic to a subcovering of
$Y' \times_{\Spec \Q} \Spec \Q^\ab \to Y$,
where $\Q^\ab$ is a maximal abelian extension of $\Q$
(see Definition \ref{def:geo-max-cov}).

Let $Y_0(p) \subset X_0(p)$
be an open subscheme
such that $X_0(p) \setminus Y_0(p)$
consists of all (two) cusps.
In this short note,
we shall construct a geometrically maximal 
unramified covering of $Y_0(p)$ and prove 
the following result.

\begin{theorem}\label{thm:main1}
There exists a cyclic covering
$\ul{f}_p' : X_2'(p) \to X_0(p)$ of degree $2N_p$
such that
\begin{enumerate}
\item 
$\ul{f}_p'|_{Y_2'(p)} : Y_2'(p) \to Y_0(p)$ is
a geometrically maximal unramified covering of $Y_0(p)$ over $\Q$,
where $Y_2'(p):={\ul{f}_p'}^{-1}(Y_0(p))$;
and 
\item
$\ul{f}_p'$ factors as
$\ul{f}_p'=\ul{f}_p \circ \pi_p$,
where $\pi_p : X_2'(p) \to X_2(p)$
is a degree two covering
that is unramified outside cusps.
\end{enumerate}
\[
\xymatrix{
& X_2'(p) \ar[d]_{\pi_p} \ar[dr]^{\ul{f}_p'}&
\\
X_1(p) \ar[r]^{} \ar@/_2em/[rr]_{f_p} &
X_2(p) \ar[r]^{\ul{f}_p} &
X_0(p).
}
\]
\end{theorem}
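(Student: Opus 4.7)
My approach is to construct $X_2'(p)$ as a Kummer double cover of $X_2(p)$ defined by a generalized Dedekind eta product, and to verify geometric maximality via the generalized Jacobian of $X_0(p)$ with modulus at the cusps. Start with the classical modular unit $\phi := (\eta(pz)/\eta(z))^{24/\gcd(p-1,12)}$ on $X_0(p)$, which has $\div(\phi) = N_p((\infty)-(0))$ and generates the cuspidal units modulo constants. Its pullback along the (unramified) Shimura covering is an $N_p$-th power, $\pi_p^*\phi = \tilde\phi^{N_p}$, where $\tilde\phi$ is a modular unit on $X_2(p)$ with divisor $\sum_{i=1}^{N_p}((\infty_i)-(0_i))$ supported on cusps with coefficients $\pm 1$. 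Using generalized Dedekind eta functions on $X_1(p)$ (or $X(p)$), I would exhibit $\tilde\phi$ explicitly together with its transformation properties under the Shimura Galois action, and then define $X_2'(p)$ to be the smooth projective model of $\Q(X_2(p))[\sqrt{\tilde\phi}]$ (suitably descended to $\Q$).

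Granted this construction, property (2) is immediate: $\pi_p$ has degree $2$, defined by a square root of a function whose divisor is supported on cusps of $X_2(p)$. For cyclicity of $\ul{f}_p' : X_2'(p) \to X_0(p)$ of degree $2N_p$ over $\Q$ in property (1), let $\sigma$ generate $\Gal(X_2(p)/X_0(p)) \cong \Z/N_p$. A lift $\tilde\sigma$ of $\sigma$ to $\Aut(X_2'(p)/X_0(p))$ satisfies $\tilde\sigma^{N_p} \in \Gal(X_2'(p)/X_2(p)) = \Z/2$, and cyclicity of total order $2N_p$ is equivalent to $\tilde\sigma^{N_p}$ being the nontrivial deck involution $\sqrt{\tilde\phi} \mapsto -\sqrt{\tilde\phi}$ rather than the identity. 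Writing $\sigma(\tilde\phi) = b^2 \tilde\phi$ with $b \in \Q(X_2(p))^*$, the condition reduces to the norm identity $N_{X_2(p)/X_0(p)}(b) = -1$, which I would verify using the transformation formulas for the generalized Dedekind eta functions and the Atkin-Lehner involution $\eta(-1/\tau) = \sqrt{-i\tau}\,\eta(\tau)$. For the geometric maximality assertion in (1), I would invoke geometric class field theory: abelian coverings of $Y_0(p)$ unramified are classified by finite quotients of the generalized Jacobian $J_m$ of $(X_0(p), m)$ with $m = (0) + (\infty)$, fitting in the exact sequence $0 \to \G_m \to J_m \to J_0(p) \to 0$. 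Mazur's theorem bounds the $G_\Q$-invariant subgroups of $J_0(p)$ by the cyclic cuspidal subgroup $\langle [(\infty)-(0)] \rangle \cong \Z/N_p$, and combining with the $\Q$-rational torsion $\mu_2 \subset \G_m$ yields the bound $2N_p$ for the maximal $G_\Q$-invariant cyclic subgroup of $J_m$; our $X_2'(p)$ realizes this bound via the explicit $G_\Q$-equivariant lift produced above.

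The main obstacle is the verification $N(b) = -1$, or equivalently, that a $G_\Q$-invariant lift of $[(\infty)-(0)] \in J_0(p)(\Q)$ to $J_m$ can be chosen so that $N_p$ times the lift equals $-1 \in \mu_2 \subset \G_m$ rather than $+1$ or an element of infinite order in $\G_m$. For $N_p$ odd this is essentially automatic by coprimality of $\sigma$ with the deck involution, but for $N_p$ even the required sign is delicate and is precisely what the generalized Dedekind eta machinery is designed to produce: the $-1$ emerges from the Atkin-Lehner transformation of the eta quotient after careful bookkeeping of the multiplier system. Pinning down this sign is the technical heart of the argument, and once established, both the cyclic $\Z/2N_p$ Galois structure over $\Q$ and geometric maximality follow.
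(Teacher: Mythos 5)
Your overall architecture matches the paper's: the reduction of geometric maximality to a degree count via the generalized Jacobian $0\to\G_D\to\tJ\to J\to0$ and Mazur's theorem is exactly the paper's ``first reduction'' (the toric part contributes a single $\Z/2$, so the relevant group has order at most $2N_p$, and any geometrically integral cyclic covering of degree $2N_p$, unramified outside the cusps and factoring through $X_2(p)$, is then automatically maximal), and your reformulation of cyclicity as the norm condition $N_{X_2(p)/X_0(p)}(b)=-1$ is a sound way to isolate what must be checked. However, there is a genuine gap at the very start of your construction: the function $\tilde\phi$ you posit does not exist. You want $\tilde\phi\in\Q(X_2(p))^\times$ with $\ul{f}_p^{\,*}\phi=\tilde\phi^{N_p}$ and $\div(\tilde\phi)=\sum_i\bigl((\infty_i)-(0_i)\bigr)=\ul{f}_p^{\,*}\bigl((\infty)-(0)\bigr)$. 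Such a $\tilde\phi$ exists (even over $\ol{\Q}$) if and only if the cuspidal class $c=[(\infty)-(0)]$ lies in $\Sigma(\ul{f}_p)=\ker\bigl(\ul{f}_p^{\,*}:J_0(p)\to\Jac(X_2(p))\bigr)$. By Lemma \ref{lem:tSg-mutype} this kernel is contained in $J_\Tor^\mu$, hence equals the Shimura subgroup $\cong\mu_{N_p}$ as a Galois module; but $c$ is a $\Q$-rational point of exact order $N_p$, and a point that is both $G_\Q$-fixed and of $\mu$-type has order dividing $2$. So for $N_p>2$ the divisor $\ul{f}_p^{\,*}((\infty)-(0))$ is not principal on $X_2(p)$ and $\ul{f}_p^{\,*}\phi$ is not an $N_p$-th power there. (This is the classical fact that the cuspidal and Shimura subgroups of $J_0(p)$ are distinct; taking literal roots of $\phi$ produces the Kummer tower $\ol{\Q}(X_0(p))(\phi^{1/2N_p})$ with deck group $\mu_{2N_p}$ rather than a constant $\Z/2N_p$ --- the wrong, non-$\Q$-cyclic covering.)

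The object the paper actually constructs is not an $N_p$-th root of $\ul{f}_p^{\,*}\phi$ but a single modular unit $F_h^2$ on $X_2(p)$ (a product of twelve generalized Dedekind eta functions $E_{g^{jk}h}$, counted with multiplicity) whose full set of $\Gal(X_2(p)/X_0(p))$-conjugates $F_{g^jh}^2$ multiplies out to $\phi^{\pm1}$; its order at every cusp is odd, and the transformation law $F_h(\gamma\tau)=\psi(\gamma)F_{ah}(\tau)$ supplies exactly the sign you need --- equivalently, $\Gamma_2'(p)$ is realized as the kernel of an order-$2N_p$ character on $\Gamma_0(p)$, whence cyclicity for free. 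That verification (the multiplier computation $\epsilon(a,bp,c,d)^6=\psi(\gamma)$, the congruence $\sum_jg^{2jk}\equiv0\bmod p$, the Bernoulli-function computation of cusp orders, and a separate construction $G_h=(E_{h_1}E_{h_2}E_{h_3})^2$ with $h_1^2+h_2^2+h_3^2\equiv0\bmod p$ for the degenerate case $p\equiv11\bmod{12}$, where your $\ell$-fold product collapses) is the actual content of the proof, and your proposal defers all of it as the acknowledged ``technical heart.'' As written, the proposal therefore both starts from a false premise and omits the essential computation.
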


We shall prepare some general facts 
on abelian coverings of smooth curves
in \S \ref{sec:covering}.
We then prove Theorem \ref{thm:main1} in \S \ref{sec:modular}.
The function field $\Q(X_2'(p))$ 
of $X_2'(p)$ will be obtained as
a quadratic extension of $\Q(X_2(p))$
generated by a square root of 
an explicitly constructed rational function in
$\Q(X_2(p))$.
A key ingredient for this construction 
is the {\it generalized Dedekind eta functions},
which we recall in \S \ref{sec:ded-eta}.

\subsection{Notations and conventions}\label{sect:notation}
Let $G$ be a profinite group.
We write $G^\ab$ for the 
quotient of $G$ by the closure of its commutator subgroup.
For a $G$-module $M$,
we denote by 
$M^{G} \subset M$ and $M \twoheadrightarrow M_{G}$ 
its $G$-invariant subgroup and $G$-coinvariant quotient, respectively.
When $M$ is locally compact,
we write $M^*$ for the Pontryagin dual of $M$.
In practice, we shall only consider the cases
where $M$ is either a discrete torsion group
or a compact free $\hat{\Z}$-module of finite rank,
hence $M$ is a $\hat{\Z}$-module 
and $M^* = \Hom(M, \Q/\Z)$,
where $\Hom$ means the group of $\hat{\Z}$-linear maps.
Note also that we have canonical isomorphisms
\begin{equation}\label{eq:dual-inv-coinv}
(M^*)^G \cong (M_G)^*
\quad \text{and} \quad
(M^G)^* \cong (M^*)_G.
\end{equation}
Indeed, both sides of the first isomorphism are identified with
the group of continuous $G$-equivariant homomorphisms $M \to \Q/\Z$.
The second follows from the first
by replacing $M$ by $M^*$ and $M^{**} \cong M$.

Let $S$ be a scheme which is 
separated and of finite type over a field.
We write $\G_{m, S}$ for the multiplicative group scheme over $S$,
and $\mu_{n, S} :=\ker(n : \G_{m, S} \to \G_{m, S})$.
When $S=\Spec R$ is affine, we write $G_{\Spec R}=G_R$.
For an \'etale sheaf $F$ on $S$,
we write $H^*(S, F)$ (resp. $H^*_c(S, F)$)
for the \'etale cohomology 
(resp. the \'etale cohomology with compact support).
A $G_k$-module $M$ is identified with an \'etale sheaf on $\Spec k$
and we write $H^i(\Spec k, M)=H^i(k, M)$.
Suppose now $S$ is connected.
We write $\pi_1(S)^\ab := \pi_1(S, x)^\ab$,
where $\pi_1(S, x)$ is the \'etale fundamental group of $S$
with respect to some geometric point $x \to S$
(on which $\pi_1(S, x)^\ab$ does not depend up to unique isomorphism).
In particular, we have a canonical isomorphism
\begin{equation}\label{eq:pi-H1-duality}
\pi_1(S)^\ab \cong H^1(S, \Q/\Z)^*  %\Hom(H^1(S, \Q/\Z), \Q/\Z).
\end{equation}

Let $k$ be a field of characteristic zero.
We take an algebraic closure $\ol{k}$ and
put $G_k:=\Gal(\ol{k}/k)$.
We write 
$\Z/n\Z(r)=\mu_{n, k}(\ol{k})^{\otimes r}$ if $r \ge 0$,
and 
$\Z/n\Z(r)=\Hom(\Z/n\Z(-r), \Z/n\Z)$ if $r < 0$.
Let $M$ be a $G_k$-module.
We write $M[n]:=\{ x \in M \mid nx=0 \}$.
We define the (twisted) Tate module by 
$TM(r) := \lim_n M[n] \otimes \Z/n\Z(r)$.
If $M$ is torsion,
then we define $M(r):=M \otimes_{\hat{\Z}} \hat{\Z}(r)$,
where $\hat{\Z}(r):=\lim_n \Z/n\Z(r)$.
For general $M$,
we define its maximal $\mu$-type subgroup by
\begin{equation}\label{eq:def-mu}
M^\mu := 
\{ a \in M_\Tor
\mid \sigma(a)=\chi_\cyc(\sigma) a
~\text{for all}~ \sigma \in G_k \},
\end{equation}
where $\chi_\cyc : G_k \to \hat{\Z}^\times$
is the cyclotomic character.
(In other words, 
$M^\mu(-1)=M_\Tor(-1)^{G_k}$.)

A commutative algebraic group $A$ over $k$
defines a $G_k$-module $A(\ol{k})$,
and hence an \'etale sheaf on $\Spec k$.
In this case, to ease the notation
we often write $A$ for $A(\ol{k})$,
e.g. 
$A[n]=A(\ol{k})[n], TA(r)=TA(\ol{k})(r)$ 
and $A^\mu = A(\ol{k})^\mu$.

\section{Abelian coverings of smooth curves}\label{sec:covering}
In this section, we collect basic 
facts about abelian fundamental group
of a smooth curve.

\subsection{Setting}\label{sec:setting}
Let $X$ be a smooth proper geometrically integral curve over 
a field $k$ of characteristic zero.
Let $D$ be an effective reduced divisor on $X$
and set $Y:= X \setminus D$.
We write $g_X$ for the genus of $X$, and set
\begin{equation}\label{eq:def-rD}
r_D:=\sum_{x \in D} [k(x):k] - 1.
\end{equation}
We suppose that $X$ admits a degree one divisor.
Note that
this implies the existence of
a degree one divisor $E$ supported on $Y$
by the approximation lemma
(see e.g. \cite[{p. 12}]{serreLF}).
Indeed, 
let $E_1=\sum n_x x$ be a degree one divisor on $X$.
%and let $y \in Y \setminus |E_1|$ be any closed point.
For each $x \in |D| \cap |E_1|$ 
(resp. $x \in |D| \setminus |E_1|$),
let $a_x \in k(X)$ be such that $\ord_x(a_x)=n_x$ 
(resp. $a_x=1$).
By applying the cited result 
to the family $(a_x)_{x \in |D|}$,
we obtain $f \in k(X)$ such that
$\ord_x(f)=n_x$ for all $x \in |D| \cap |E_1|$
and $\ord_x(f)=0$ for all $x \in |D| \setminus |E_1|$.
Then $E:=E_1-\div(f)$ is a degree one divisor
supported on $Y$.

\subsection{Geometrically maximal unramified abelian covering}
Let $k^\ab$ be the maximal abelian extension of $k$.
We have $G_k^\ab = \Gal(k^\ab/k)$ and
$\pi_1(Y)^\ab=\Gal(k(Y)^{\ur, \ab}/k(Y))$,
where $k(Y)^{\ur, \ab}$ is the maximal unramified abelian extension
of the function field $k(Y)$ of $Y$.
Denote by $\pi_1(Y)^{\ab, \geo}$
the kernel of the canonical map
$\pi_1(Y)^\ab \to G_k^\ab$.
We have an exact sequence
\begin{equation}\label{eq:ex-seq1}
0 \to \pi_1(Y)^{\ab, \geo} \to \pi_1(Y)^\ab \to G_k^\ab \to 0.
\end{equation}

\begin{remark}\label{rem:KL}
It is shown in \cite[Theorem 1]{KL} that
$\pi_1(Y)^{\ab, \geo}$ is finite
if $k$ is finitely generated over its prime subfield.
\end{remark}

\begin{definition}\label{def:geo-max-cov}
We say a finite unramified abelian covering
$Y' \to Y$
is {\it geometrically maximal} over $k$
if 
$Y'$ is geometrically integral over $k$
and if the composition 
$\pi_1(Y)^{\ab, \geo} \hookrightarrow \pi_1(Y)^{\ab} \twoheadrightarrow \Gal(Y'/Y)$
is an isomorphism:
\[
\xymatrix{
& k(Y)^{\ur, \ab} & 
\\
k(Y') \ar[ur] & & k^\ab(Y) \ar[ul]_{\pi_1(Y)^{\ab, \geo}}
\\
& k(Y). \ar[ul]^{\Gal(Y'/Y)} \ar[ur]_{G_k^\ab} &
}
\]
(Equivalently, 
any finite unramified abelian covering $Y'' \to Y$
is a subcovering of $Y' \times_{\Spec k} \Spec k^\ab \to Y$.
See also Remark \ref{def:geomax} 
and Proposition \ref{prop:ex-gm} below.)
\end{definition}

When $Y$ has a $k$-rational point $x$,
a maximal abelian unramified covering of $Y$
in which $x$ splits completely
yields a geometrically maximal covering.
There is, however, no such a description 
if no $k$-rational point is available.
Given $Y$, we are interested 
in finding a geometrically maximal unramified abelian covering.
%\[
%\xymatrix{
%& k(Y)^{\ur, \ab} & 
%\\
%k(Y^{\ur, \ab}_{x, \spl}) \ar[ur]^{G_k^\ab} & & k^\ab(Y) \ar[ul]_{\pi_1^{\ab, \geo}(Y)}
%\\
%& k(Y). \ar[ul]^{\pi_1^{\ab, \geo}(Y)} \ar[ur]_{G_k^\ab} &
%}
%\]

\subsection{Generalized Jacobian}
Let $\tJ := \Jac(X, D)$ be 
the generalized Jacobian of $X$ with modulus $D$
in the sense of Rosenlicht-Serre \cite{Serre},
which is a semi-abelian variety over $k$.
It fits in an exact sequence
\begin{equation}\label{eq:tj-ext}
 0 \to \G_D \to \tJ \to J \to 0,
\end{equation}
where $J=\Jac(X)$ is the Jacobian variety of $X$,
and
\begin{equation}\label{eq:def-GD} 
\G_D := \Coker[\G_{m, k} 
\to \bigoplus_{x \in D} \Res_{k(x)/k} \G_{m, k(x)}].
\end{equation}
%is a torus over $k$ of dimension $r:=\sum_x [k(x):k] - 1$.
We also recall that, 
if we put $\G_{X, D} := \ker(\G_{m, X} \to \G_{m, D})$,
there are isomorphisms
\begin{align}
\label{eq:tj-div}
\tJ(k) 
&\cong \Div^0(Y)/ 
\{ \div(f) \mid f \in k(Y)^\times, ~\ord_x(f-1) \ge 1 ~\text{for any}~ x \in D \}
\\
\label{eq:tj-h1}
&\cong \ker(H^1(X, \G_{X, D}) \to H^1(X, \G_{m, X}) 
\overset{\deg}{\to} \Z).
\end{align}
\subsection{A Galois module $M_\Tor$}
We define a divisible torsion $G_k$-module $M_\Tor$ by 
\begin{equation}\label{eq:def-mtor}
 M_\Tor := (T\tJ)^*(1). % = \Hom(T\tJ, \Q/\Z(1)).
\end{equation}
(See \S \ref{sect:notation} for the notations for 
$T\tJ,~ (-)^*$ and $(-)(1)$).
Although this is not the torsion subgroup of a group $M$,
the notation $M_\Tor$ may be justified by the fact that
$M_\Tor$ agrees with the group of torsion points of 
the dual {\it $1$-motive} $M$ of $\tJ$
in the sense of \cite[Chapter V, \S 3]{Tate}.
(We will not use this fact.)
Let
\begin{equation}\label{eq:def-LGD} 
L_D := \Hom(\G_D, \G_{m, k})
\end{equation}
be the character group of $\G_D$ from \eqref{eq:def-GD}.
This is thus
a free abelian group of rank 
$r_D$ (see \eqref{eq:def-rD}),
%$r_D:=\sum_{x \in D} [k(x):k] - 1$
equipped with a continuous $G_k$-action.
Recall that $g_X$ is the genus of $X$.

\begin{lemma}\label{prop:str-mtor}
There is an exact sequence of $G_k$-modules
\begin{equation}\label{eq:MforX}
0 \to J_\Tor \to M_\Tor \to L_D \otimes \Q/\Z \to 0.
\end{equation}
In particular, $M_\Tor \cong (\Q/\Z)^{\oplus(g_X+r_D)}$ 
as abelian groups.
\end{lemma}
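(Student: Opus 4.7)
The plan is to apply the Tate module functor to the Rosenlicht--Serre extension \eqref{eq:tj-ext}, then Pontryagin dualize and Tate twist, and finally identify the two outer terms.

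First, because $\G_D$ is a torus, multiplication by $n$ is surjective on $\G_D(\ol{k})$ and hence also on $\tJ(\ol{k})$. The snake lemma applied to \eqref{eq:tj-ext} therefore yields, for every $n \ge 1$, a short exact sequence
\begin{equation*}
0 \to \G_D[n] \to \tJ[n] \to J[n] \to 0
\end{equation*}
of finite $G_k$-modules. Taking the inverse limit over $n$ (exact because the terms are finite) produces a short exact sequence
\begin{equation*}
0 \to T\G_D \to T\tJ \to TJ \to 0
\end{equation*}
of free $\hat{\Z}$-modules of finite rank. Applying the exact functor $(-)^* = \Hom(-,\Q/\Z)$, which is exact on such modules, and then the Tate twist $(1)$, gives
\begin{equation*}
0 \to (TJ)^*(1) \to M_\Tor \to (T\G_D)^*(1) \to 0.
\end{equation*}

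Next I would identify the outer terms. For the left end, the Weil pairing on the self-dual Jacobian $J$ supplies a $G_k$-equivariant perfect pairing $J[n] \times J[n] \to \mu_n$, which yields $J[n] \cong J[n]^*(1)$; passing to the colimit and using $(TJ)^* \cong \colim_n J[n]^*$ gives $(TJ)^*(1) \cong J_\Tor$. For the right end, the torus $\G_D$ has character group $L_D$ by \eqref{eq:def-LGD}, so $T\G_D \cong \Hom(L_D, \hat{\Z}(1))$ as $G_k$-modules; a direct computation using the canonical character-cocharacter pairing then yields $(T\G_D)^*(1) \cong L_D \otimes_{\Z} \Q/\Z$. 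Splicing these identifications into the dual sequence produces \eqref{eq:MforX}. For the final claim about the underlying abelian group, the right end $L_D \otimes \Q/\Z$ is divisible and hence injective as an abelian group, so the extension splits, and the stated structure then follows by counting $\hat{\Z}$-ranks of the Tate modules.

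I do not anticipate a serious obstacle. The only mildly fiddly point is keeping the Tate twists straight, in particular the observation that for any $k$-torus $S$ with character group $X^*(S)$ one has $(TS)^*(1) \cong X^*(S) \otimes \Q/\Z$ canonically as $G_k$-modules; everything else is a routine application of exact functors.
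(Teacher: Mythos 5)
Your argument is correct and is essentially the paper's own (much terser) proof written out in full: apply $(T(-))^{*}(1)$ to the extension \eqref{eq:tj-ext} and identify the two outer terms, the left one via the Weil pairing $TJ\times J_\Tor\to\Q/\Z(1)$ and the right one via the character--cocharacter pairing of the torus $\G_D$. Two small points. First, your justification of the splitting is backwards: an extension of abelian groups whose \emph{quotient} is divisible (hence injective) need not split (e.g.\ $0\to\Z\to\Q\to\Q/\Z\to0$); what you actually need is that the \emph{sub} $J_\Tor$ is divisible, hence injective as an abelian group, which forces a retraction --- the conclusion is unaffected since both ends here are divisible. Second, the rank count you invoke gives corank $2g_X+r_D$ (the $\hat{\Z}$-ranks of $TJ$ and $T\G_D$ are $2g_X$ and $r_D$ respectively), not $g_X+r_D$ as in the displayed statement; the exponent there appears to be a typo in the paper, and your method yields the correct value, so your closing claim that ``the stated structure follows'' should be adjusted accordingly.
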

\begin{proof}
The first statement follows from \eqref{eq:tj-ext}
and an isomorphism
%\[ J_\Tor \cong \Hom(TJ, \Q/\Z(1)) \]
$J_\Tor \cong (TJ)^*(1)$
deduced by
the Weil pairing $TJ \times J_\Tor \to \Q/\Z(1)$.
The last statement follows form \eqref{eq:MforX}.
\end{proof}

By taking the Tate twist $(-1)$ 
and the long exact sequence attached to \eqref{eq:MforX}
(see also \eqref{eq:def-mu}),
we get an exact sequence
that will be used later 
\begin{equation}\label{eq:exact-mtor}
 0 \to J_\Tor^\mu(-1)
\to M_\Tor^\mu(-1)
\to (L_D \otimes \Q/\Z)^\mu(-1)
\to H^1(k, J_\Tor(-1)).
\end{equation}

\begin{example}\label{ex:two}
Suppose that $D=P+Q$ consists of 
two $k$-rational points.
Then we have an exact sequence
\[ 0 \to J_\Tor \to M_\Tor \to \Q/\Z \to 0. \]
Explicitely, the $G_k$-module $M_\Tor = \colim_n M_\Tor[n]$ 
can be described as follows.
We have
\[ M_\Tor[n] = J[n] \oplus (\Z/n\Z) \]
as an abelian group,
and $G_k$-action is given by
\[ \sigma(\alpha, b) = (\sigma(\alpha) + b(\sigma(\beta)-\beta), b) \]
where $\sigma \in G_k, ~\alpha \in J[n], ~b \in \Z/n\Z$
and $\beta \in J(\ol{k})$ is a fixed element 
such that $n \beta$ is the class of $P-Q$ in $J(\ol{k})$
(see \eqref{eq:tj-div}).
\end{example}

\subsection{Abelian fundamental group}
The following result is essentially shown in \cite{KL},
but it is embedded in a (long) proof,
and hence we decide to include a proof to cut out the desired part.

\begin{proposition}\label{prop:exact-seq}
The exact sequence \eqref{eq:ex-seq1} splits (non-canonically),
and we have a canonical isomorphism
(see \S \ref{sect:notation} 
for the notations used in the right hand side)
%\[ \pi_1(Y)^{\ab, \geo} \cong \Hom(M_\Tor^\mu(-1), \Q/\Z).\]
\[ \pi_1(Y)^{\ab, \geo} 
\cong M_\Tor^\mu(-1)^*. %(= \Hom(M_\Tor^\mu(-1), \Q/\Z)).
\]
\end{proposition}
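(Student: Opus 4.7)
The plan is to combine the class field theory of the generalized Jacobian with the Hochschild--Serre spectral sequence for $Y_{\ol{k}} \to Y \to \Spec k$, using the degree-one divisor $E$ supported on $Y$ from \S\ref{sec:setting} to produce the splitting of \eqref{eq:ex-seq1}.

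First, I would establish the geometric input $\pi_1(Y_{\ol{k}})^\ab \cong T\tJ(\ol{k})$ as a $G_k$-module. This is the Albanese-theoretic class field theory of Serre: every finite abelian \'etale cover of $Y_{\ol{k}}$ is the pullback under $\phi_E \colon Y_{\ol{k}} \to \tJ_{\ol{k}}$, $x \mapsto [x - E]$, of an isogeny $\tJ_{\ol{k}} \to \tJ_{\ol{k}}$, and the resulting identification is compatible with Galois by functoriality of $\pi_1$. Pontryagin-dualizing via \eqref{eq:pi-H1-duality}, together with the definition $M_\Tor = (T\tJ)^*(1)$, then gives $H^1(Y_{\ol{k}}, \Q/\Z) \cong M_\Tor(-1)$ as $G_k$-modules.

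Next, I would apply the Hochschild--Serre five-term exact sequence
\[
0 \to H^1(k, \Q/\Z) \to H^1(Y, \Q/\Z) \to H^1(Y_{\ol{k}}, \Q/\Z)^{G_k} \xrightarrow{d} H^2(k, \Q/\Z),
\]
substituting $H^1(Y_{\ol{k}}, \Q/\Z)^{G_k} = M_\Tor(-1)^{G_k} = M_\Tor^\mu(-1)$ by the defining relation of $(-)^\mu$. To kill the transgression $d$ and produce a splitting, I would use the morphism $\phi_E \colon Y \to \tJ$ over $k$, which is well-defined because $E$ is a $k$-rational divisor supported on $Y$. Since $\tJ$ has the $k$-rational origin $0$, the analogous Hochschild--Serre sequence for $\tJ$ splits canonically, so its transgression vanishes; by functoriality and the fact that $(\phi_E)_*$ is an isomorphism on geometric abelian $\pi_1$, the same holds for $Y$, and the pullback of the canonical splitting for $\tJ$ yields a (non-canonical) splitting of the resulting short exact sequence $0 \to H^1(k, \Q/\Z) \to H^1(Y, \Q/\Z) \to M_\Tor^\mu(-1) \to 0$.

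Finally, I would dualize this split short exact sequence via \eqref{eq:pi-H1-duality} and $H^1(k, \Q/\Z)^* = G_k^\ab$ to obtain $0 \to M_\Tor^\mu(-1)^* \to \pi_1(Y)^\ab \to G_k^\ab \to 0$, identifying $\pi_1(Y)^{\ab, \geo}$ with $M_\Tor^\mu(-1)^*$ and giving the splitting of \eqref{eq:ex-seq1}. The main obstacle is tracking the Galois action carefully so that the identification $\pi_1(Y_{\ol{k}})^\ab \cong T\tJ$ is genuinely $G_k$-equivariant and so that the Tate twists in the definition of $M_\Tor$ match with those produced by Pontryagin duality; once that is in hand, the use of $\phi_E$ makes the transgression-killing argument routine. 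A secondary subtlety is that $E$ is merely a $k$-rational divisor rather than a $k$-rational point, so $\phi_E$ is defined on $\ol{k}$-valued points via the class of $x - E$ in $\tJ(\ol{k})$, and one must verify that this assignment extends to a morphism of $k$-schemes.
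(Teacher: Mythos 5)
Your proposal is correct and follows the same overall skeleton as the paper (Hochschild--Serre five-term sequence for $\ol{Y} \to Y \to \Spec k$, use of the degree-one divisor $E$ supported on $Y$ to kill the transgression and split the sequence, then Pontryagin duality via \eqref{eq:pi-H1-duality} and \eqref{eq:dual-inv-coinv}), but two of your sub-arguments differ genuinely from the paper's. First, for the key identification $\pi_1(\ol{Y})^\ab \cong T\tJ$ you invoke Serre's geometric class field theory: the Albanese map $\phi_E$ into the generalized Jacobian realizes every finite abelian covering of $\ol{Y}$ as pulled back from an isogeny. The paper instead derives this from the Kummer-type sequence $0 \to j_!\mu_{n,Y} \to \G_{X,D} \to \G_{X,D} \to 0$, the resulting identification $T\tJ \cong H^1_c(\ol{Y},\hat\Z(1))$ in \eqref{eq:hc-tj}, and Poincar\'e duality; the cohomological route has the advantage of producing the auxiliary isomorphisms \eqref{eq:hc-tj2} used elsewhere, while yours is closer to the classical Rosenlicht--Serre picture and makes the $G_k$-equivariance transparent (since $\phi_E$ is a $k$-morphism). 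Second, to split the sequence the paper builds an explicit retraction $\sum n_x N_{k(x)/k}\circ i_x^*$ out of the closed points in the support of $E$, which works directly on $H^1$ and $H^2$ without leaving $Y$; you instead transport the canonical splitting for $\tJ$ (coming from its origin) through $\phi_E$, which requires the additional (true, but worth stating) facts that $\phi_E$ is defined over $k$ for a $k$-rational degree-one divisor $E$ and that $\phi_E^*$ is an isomorphism on geometric $H^1$ with $\Q/\Z$-coefficients. Both versions are sound; if you write yours up, the main point to nail down is the tame/char-zero input guaranteeing that the \emph{reduced} modulus $D$ already captures all of $\pi_1(\ol{Y})^\ab$ (i.e.\ that the principal-unit groups at the points of $D$ are divisible in characteristic zero), since this is exactly where the Albanese argument could silently fail in positive characteristic.
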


\begin{proof}
Set $\ol{Y}:=Y \times_{\Spec k} \Spec \ol{k}$.
The spectral sequence
$E_2^{i, j}=H^i(k, H^j(\ol{Y}, \Q/\Z)) \Rightarrow
H^{i+j}(Y, \Q/\Z)$
yields an exact sequence
\begin{align*}
0 
&\to H^1(k, \Q/\Z)
\to H^1(Y, \Q/\Z)
\to H^0(k, H^1(\ol{Y}, \Q/\Z))
\\
&\to H^2(k, \Q/\Z)
\overset{\phi}{\to} H^2(Y, \Q/\Z).
\end{align*}
Recall that by assumption 
we have a degree one divisor 
$E=\sum n_x x$ supported on $Y$ (see \S \ref{sec:setting}).
It gives rise to a section 
$\sum n_x N_{k(x)/k} \circ i_x^*$ of $\phi$,
where $i_x^* : H^2(Y, \Q/\Z) \to H^2(k(x), \Q/\Z)$ 
is the pull-back along 
the closed immersion $i_x : \Spec k(x)=x \to Y$
and $N_{k(x)/k} : H^2(k(x), \Q/\Z) \to H^2(k, \Q/\Z)$ 
is the norm (corestriction) map.
This shows the injectivity of  $\phi$.
By taking the dual sequence, we obtain a 
short exact sequence (see \eqref{eq:dual-inv-coinv})
\[ 0 \to (\pi_1(\ol{Y})^\ab)_{G_k}
\to \pi_1(Y)^\ab \to G_k^\ab \to 1,
\]
which splits again by the presence of $E$.
We have shown 
$(\pi_1(\ol{Y})^\ab)_{G_k} \cong \pi_1(Y)^{\ab, \geo}$.
We also have
$(T\tJ)_{G_k} \cong \Hom(M_\Tor^\mu(-1), \Q/\Z)$
by the definition \eqref{eq:def-mtor}.

Hence it remains to show an isomorphism of $G_k$-modules
\begin{equation}\label{eq:pi-tate-mtor}
 \pi_1(\ol{Y})^\ab \cong T\tJ %\cong \Hom(M_\Tor(-1), \Q/\Z), 
\end{equation}
from which the proposition follows
by taking $G_k$-coinvariant quotients.
By the definitions of $\G_{X, D}$ from \eqref{eq:tj-h1},
there exists an exact sequence of \'etale sheaves
\[ 0 \to j_!(\mu_{n, Y}) \to \G_{X, D} \overset{n}{\to} \G_{X, D} \to 0 
\]
for any $n$,
where $j : Y \to X$ is the open immersion
so that $j_!(\mu_{n, Y}) \cong \ker(\mu_{n, X} \to \mu_{n, D})$.
It follows that
\begin{equation}\label{eq:hc-tj}
\tJ_\Tor[n] \cong H^1_c(\ol{Y}, \mu_n),\quad
\tJ_\Tor \cong H^1_c(\ol{Y}, \Q/\Z(1)),\quad
T\tJ \cong H^1_c(\ol{Y}, \hat{\Z}(1)).
\end{equation}
The last isomorphism
shows $T\tJ \cong H^1(\ol{Y}, \Q/\Z)^*$
by the Poincar\'e duality.
Now \eqref{eq:pi-tate-mtor} follows from
\eqref{eq:pi-H1-duality}.
\end{proof}

\begin{remark}
By applying the functor $A \mapsto A^*(1)$
to \eqref{eq:hc-tj},
we obtain isomorphisms
\begin{equation}\label{eq:hc-tj2}
M_\Tor[n] \cong H^1(\ol{Y}, \mu_n),\quad
M_\Tor \cong H^1(\ol{Y}, \Q/\Z(1)),\quad
TM_\Tor \cong H^1(\ol{Y}, \hat{\Z}(1)).
\end{equation}
\end{remark}

\subsection{Covering of $Y$}
Let $X'$ be another proper smooth geometrically integral curve over $k$
admitting a degree one divisor,
and let $f : X' \to X$ be a finite $k$-morphism.
Put $Y':=f^{-1}(Y)$ and $D':=X' \setminus Y'$.
(We regard $D'$ as a reduced effective divisor on $X'$.)
Set $J':=\Jac(X'), \tJ':=\Jac(X', D')$
and $M_\Tor':= \Hom(T \tJ', \Q/\Z(1))$.
We have the pull-back maps 
$f^* : J \to J', \tJ \to \tJ', M_\Tor \to M_\Tor'$ 
and the push-forward maps 
$f_* : J' \to J, \tJ' \to \tJ, M_\Tor' \to M_\Tor$.

\begin{definition}\label{def:tSg}
We define
\[ \Sigma(f):=\ker(f^* : J(\ol{k}) \to J'(\ol{k})), \quad
\Sigma_D(f):=\ker(f^* : M_\Tor \to M_\Tor').
\]
\end{definition}

Note that $\Sigma(f) \subset J[d]$ with $d:=\deg(f)$,
because $f_* \circ f^* = d$.
It follows that $\Sigma(f)$ is finite and hence
$\Sigma(f) \subset J_\Tor$.
Similarly, $\Sigma_D(f) \subset M_\Tor'[d]$ is finite too.
The canonical maps
$J_\Tor \to M_\Tor$ and $J_\Tor' \to M_\Tor'$
from \eqref{eq:MforX}
induce $\Sigma(f) \to \Sigma_D(f)$.

\begin{lemma}
\begin{enumerate}
\item 
The map $\Sigma(f) \to \Sigma_D(f)$
is injective.
\item 
Suppose that $\mathrm{gcd}\{e(x'/x) \mid x' \in f^{-1}(x)\}=1$
for all $x \in D$,
where $e(x'/x)$ denotes the ramification index.
Then $\Sigma(f) \to \Sigma_D(f)$ is an isomorphism.
\end{enumerate}
\end{lemma}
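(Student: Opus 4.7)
The plan is to apply the snake lemma to the commutative diagram of $G_k$-modules with exact rows obtained from \eqref{eq:MforX}:
\[
\begin{array}{ccccccccc}
0 & \to & J_\Tor & \to & M_\Tor & \to & L_D \otimes \Q/\Z & \to & 0 \\
  & & \downarrow f^* & & \downarrow f^* & & \downarrow \varphi \otimes \id & & \\
0 & \to & J'_\Tor & \to & M'_\Tor & \to & L_{D'} \otimes \Q/\Z & \to & 0,
\end{array}
\]
where $\varphi \otimes \id$ denotes the map induced on the right-hand quotient by functoriality. The snake lemma yields
\[0 \to \Sigma(f) \to \Sigma_D(f) \to \ker(\varphi \otimes \id : L_D \otimes \Q/\Z \to L_{D'} \otimes \Q/\Z),\]
which gives (1) immediately and reduces (2) to showing that $\varphi \otimes \id$ is injective under the ramification hypothesis.

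The key preparatory step is to identify $\varphi : L_D \to L_{D'}$ concretely. Using the presentation
\[0 \to L_D \to \Z[D(\ol{k})] \to \Z \to 0\]
with the surjection given by summation of coefficients (and the analogous one for $L_{D'}$), I would show that $\varphi$ is the restriction of the pull-back-on-divisors map
\[\Z[D(\ol{k})] \to \Z[D'(\ol{k})], \qquad [\alpha] \longmapsto \sum_{\beta \mid \alpha} e(\beta/\alpha)\,[\beta].\]
The cleanest verification is to dualize the push-forward $f_* : \G_{D'} \to \G_D$ on cocharacters: the identity $f_* \circ f^* = \deg f$ on $\G_D$ together with $\sum_{\beta \mid \alpha} e(\beta/\alpha) = \deg f$ forces $f_*$ on $\ol{k}$-points to be $(g_\beta)_\beta \mapsto \bigl(\prod_{\beta \mid \alpha} g_\beta^{e(\beta/\alpha)}\bigr)_\alpha$, and taking the dual produces the claimed formula for $\varphi$. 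Alternatively one can invoke the \'etale-cohomological description $M_\Tor \cong H^1(\ol{Y}, \Q/\Z(1))$ from \eqref{eq:hc-tj2} and functoriality of the Gysin boundary. Pinning down this identification correctly is the main technical obstacle; the remainder is formal.

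With $\varphi$ in hand, apply the snake lemma to
\[
\begin{array}{ccccccccc}
0 & \to & L_D & \to & \Z[D(\ol{k})] & \to & \Z & \to & 0 \\
  & & \downarrow \varphi & & \downarrow & & \downarrow \deg f & & \\
0 & \to & L_{D'} & \to & \Z[D'(\ol{k})] & \to & \Z & \to & 0.
\end{array}
\]
Since $\deg f : \Z \to \Z$ is injective, $\coker(\varphi)$ embeds into the cokernel of the middle column, which decomposes along the fibers of $f$ as $\bigoplus_{\alpha \in D(\ol{k})} \coker\bigl(\Z \xrightarrow{(e(\beta/\alpha))_\beta} \Z^{|f^{-1}(\alpha)|}\bigr)$; by the Smith normal form each summand equals $\Z/d_\alpha \Z \oplus \Z^{|f^{-1}(\alpha)| - 1}$ with $d_\alpha = \gcd\{e(\beta/\alpha) : \beta \mid \alpha\}$. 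Since all $\ol{k}$-points of a closed point $x' \in X'$ lying over $\alpha$ share ramification index $e(x'/f(\alpha))$, we get $d_\alpha = \gcd\{e(x'/x) : x' \in f^{-1}(x)\}$ for $x = f(\alpha)$, which equals $1$ by hypothesis. Hence the middle cokernel (and therefore $\coker(\varphi)$) is torsion-free, and applying $-\otimes\Q/\Z$ to $0 \to L_D \xrightarrow{\varphi} L_{D'} \to \coker(\varphi) \to 0$ together with $\Tor_1(\coker(\varphi),\Q/\Z) = 0$ yields the required injectivity of $\varphi \otimes \id$, completing (2).
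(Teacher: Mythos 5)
Your proof is correct and follows essentially the same route as the paper: the snake lemma applied to the diagram built from \eqref{eq:MforX} gives (1) and reduces (2) to the injectivity of the map on the $L_D\otimes\Q/\Z$ quotients, which is then checked via the standard presentation of the character groups and the ramification indices. The only cosmetic difference is that you verify the key point integrally (torsion-freeness of $\coker\varphi$ via Smith normal form, then $\Tor_1$-vanishing), whereas the paper tensors with $\Q/\Z$ first and observes directly that $a\mapsto(e(x'/x)a)_{x'}$ on $\Q/\Z$ is injective exactly when the gcd of the ramification indices is $1$.
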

\begin{proof}
Let $L_D$ and $L_{D'}$ be the character groups
from \eqref{eq:def-LGD}.
By Lemma \ref{prop:str-mtor},
we have a commutative diagram with exact rows:
\[
\xymatrix{
0 \ar[r] &
J_\Tor \ar[r] \ar[d]^{f^*} &
M_\Tor \ar[r] \ar[d]^{f^*} &
L_D \otimes \Q/\Z \ar[r] \ar[d] &
0
\\
0 \ar[r] &
J_\Tor' \ar[r] &
M_\Tor' \ar[r] &
L_{D'} \otimes \Q/\Z \ar[r] &
0.
}
\]
This shows (1).
For (2), it suffices to show 
the injectivity of the right vertical map
under the stated assumption.
We have a commutative diagram
with exact rows:
\[
\xymatrix{
0 \ar[r] &
L_D \otimes \Q/\Z \ar[r] \ar[d] &
\underset{x \in D}{\bigoplus} \Q/\Z \ar[r]^-{\text{sum}} \ar[d] &
\Q/\Z \ar[r] \ar[d]^{\deg(f)} &
0
\\
0 \ar[r] &
L_{D'} \otimes \Q/\Z \ar[r] &
\underset{x' \in D'}{\bigoplus} \Q/\Z \ar[r]^-{\text{sum}} &
\Q/\Z \ar[r] &
0.
}
\]
Here $(x, x')$-component of 
the middle vertical map is
given by $0$ if $f(x') \not= x$
and by $e(x'/x)$ if $f(x')=x$.
The lemma follows.
\end{proof}

\begin{example}\label{ex:tSg-prime}
Let $N \in \Z_{>0}$ and
consider the canonical map $f_N : X_1(N) \to X_0(N)$,
with respect to divisors consisting of all cusps.
If $N$ is square free,
then $f_N$ is unramified at all cusps,
and hence $\Sigma(f_N) \cong \Sigma_D(f_N)$,
where $D$ is the sum of all cusps on $X_0(N)$.
%Note however that
%in general $f_N$ ramifies at some cusps
%already for $N=p^2$
%and it is not clear how the group
%$\Sigma_D(f_{p^2})/\Sigma(f_{p^2})$ looks like.
\end{example}

\begin{lemma}\label{lem:tSg-mutype}
%Suppose that $f: X' \to X$ is a (possibly branched) Galois covering.
Let $f^\uab_X : X^\uab_f \to X$ 
(resp. $f^\uab_Y : Y^\uab_f \to Y$)
be the maximal unramified abelian subcovering of 
$f: X' \to X$ (resp. $f: Y' \to Y$).
Then we have 
\begin{align*}
&\Gal(f^{\uab}_X)\cong \Hom(\Sigma(f), \Q/\Z(1)),
\\
&\Gal(f^\uab_Y) \cong \Hom(\Sigma_D(f), \Q/\Z(1)).
\end{align*}
In particular, 
$\Sigma(f) \subset J_\Tor^\mu$ and 
$\Sigma_D(f) \subset M_\Tor^\mu$
(see \eqref{eq:def-mu}).
\end{lemma}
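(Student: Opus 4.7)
The plan is to apply Proposition~\ref{prop:exact-seq} functorially to $f: X' \to X$ and to its restriction $f: Y' \to Y$, and read off $\Gal(f_X^\uab)$ and $\Gal(f_Y^\uab)$ as cokernels which, via Pontryagin duality, are dual to kernels of $f^*$ on the appropriate torsion modules.

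First I would observe that, tautologically, $\Gal(f_X^\uab)$ is the cokernel of the induced map $f_*: \pi_1(X')^{\ab} \to \pi_1(X)^{\ab}$: a finite abelian unramified cover of $X$ is a subcovering of $f$ precisely when its classifying character on $\pi_1(X)^{\ab}$ vanishes on the image of $\pi_1(X')^{\ab}$. Since both fundamental groups map compatibly to $G_k^{\ab}$ and the induced map on this arithmetic quotient is the identity, this cokernel equals $\coker\bigl(\pi_1(X')^{\ab, \geo} \to \pi_1(X)^{\ab, \geo}\bigr)$. I would then apply Proposition~\ref{prop:exact-seq} to both $(X, \emptyset)$ and $(X', \emptyset)$ and use the functoriality of \eqref{eq:pi-tate-mtor} to identify this cokernel, canonically and $G_k$-equivariantly, with the Pontryagin dual of $\ker\bigl(f^*: J_\Tor^\mu(-1) \to J'^\mu_\Tor(-1)\bigr) = (\Sigma(f) \cap J_\Tor^\mu)(-1)$, yielding
\[
\Gal(f_X^\uab) \cong \Hom\bigl(\Sigma(f) \cap J_\Tor^\mu,\ \Q/\Z(1)\bigr).
\]
The analogous computation with $(X, D)$ and $(X', D')$ in place of $(X, \emptyset)$ and $(X', \emptyset)$, and $M_\Tor$ in place of $J_\Tor$, gives $\Gal(f_Y^\uab) \cong \Hom(\Sigma_D(f) \cap M_\Tor^\mu, \Q/\Z(1))$.

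To upgrade these to the stated form, and to deduce the ``in particular'' inclusions, I would note that $\Gal(f_X^\uab)$ is the Galois group of a cover defined over $k$ and is therefore a trivial $G_k$-module. A direct computation of the $G_k$-action on $\Hom(\Sigma(f), \Q/\Z(1))$ -- from the natural action on $\Sigma(f) \subseteq J_\Tor$ together with the cyclotomic action on $\Q/\Z(1)$ -- shows that this Hom carries trivial $G_k$-action if and only if $\Sigma(f) \subseteq J_\Tor^\mu$. Since the identification above is $G_k$-equivariant, this forces the inclusion and hence $\Sigma(f) \cap J_\Tor^\mu = \Sigma(f)$; the parallel argument with $M_\Tor$ gives $\Sigma_D(f) \subseteq M_\Tor^\mu$.

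The main obstacle will be the careful tracking of the Tate twists $(\pm 1)$ and the two sources of $G_k$-action (natural on $\Sigma$, cyclotomic on $\Q/\Z(1)$) so that the key equivalence ``$\Hom(\Sigma, \Q/\Z(1))$ has trivial $G_k$-action $\iff$ $\Sigma \subseteq J_\Tor^\mu$'' becomes precise. Once this bookkeeping is in place, both statements of the lemma fall out formally from Proposition~\ref{prop:exact-seq} by functoriality.
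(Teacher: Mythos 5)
Your first two steps are fine: $\Gal(f_X^\uab)$ is indeed $\coker(f_* : \pi_1(X')^\ab \to \pi_1(X)^\ab)$, and by the snake lemma this equals $\coker(\pi_1(X')^{\ab,\geo} \to \pi_1(X)^{\ab,\geo})$. But the route through Proposition \ref{prop:exact-seq} then only produces the truncated statement you yourself write down, $\Gal(f_X^\uab) \cong \Hom(\Sigma(f) \cap J_\Tor^\mu, \Q/\Z(1))$: since $\pi_1(X)^{\ab,\geo} \cong (TJ)_{G_k}$ is a group of $G_k$-\emph{coinvariants}, dualizing its cokernel lands you in $G_k$-\emph{invariants}, namely $\ker(J_\Tor(-1)^{G_k} \to J'_\Tor(-1)^{G_k}) = (\Sigma(f)\cap J_\Tor^\mu)(-1)$, and all information about the part of $\Sigma(f)$ outside $J_\Tor^\mu$ has already been discarded. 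Your proposed bootstrap to recover it is circular: the $G_k$-equivariance of an identification whose right-hand side is $\Hom(\Sigma(f)\cap J_\Tor^\mu, \Q/\Z(1))$ only tells you that \emph{this} group has trivial action, which is automatic, and says nothing about whether $\Sigma(f) \subseteq J_\Tor^\mu$. The equivalence you invoke (trivial action on $\Hom(\Sigma(f),\Q/\Z(1))$ iff $\Sigma(f)\subseteq J_\Tor^\mu$) is correct, but you never establish that $\Hom(\Sigma(f),\Q/\Z(1))$, with the full $\Sigma(f)$, is $G_k$-isomorphic to something with trivial action.

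The missing idea is to postpone taking coinvariants: work over $\ol{k}$ and identify $\coker(\pi_1(\ol{X}')^\ab \to \pi_1(\ol{X})^\ab) \cong \coker(TJ' \to TJ) \cong \Sigma(f)^*(1) = \Hom(\Sigma(f),\Q/\Z(1))$ as $G_k$-modules, with the untruncated $\Sigma(f)$. This cokernel is $\Gal(\ol{f}_X^\uab)$, the Galois group of the maximal abelian subcover of the base change $\ol{f}$; the paper then shows $\Gal(\ol{f}_X^\uab) \cong \Gal(f_X^\uab)$ by checking that the relevant squares are Cartesian, which is exactly where the geometric integrality of $X$ and $X'$ enters. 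Only after this descent step does the triviality of the $G_k$-action on $\Gal(f_X^\uab)$ force $\Sigma(f)^*(1)$ to have trivial action, i.e.\ $\Sigma(f) \subseteq J_\Tor^\mu$, which in turn reconciles your truncated formula with the asserted one. Without some version of this geometric comparison, the inclusion $\Sigma(f)\subseteq J_\Tor^\mu$ --- which is the real content of the lemma --- is not proved. The same remark applies verbatim to $\Sigma_D(f)$ and $M_\Tor$.
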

\begin{proof}
Since the first statement is a special case of second
(for $D=\emptyset$),
we only prove the latter.
For a $k$-scheme $V$,
we write $\ol{V}:=V \times_{\Spec k} \Spec \ol{k}$.
Let $\ol{f}^\uab_Y : \ol{Y}^\uab_f \to \ol{Y}$ 
be the maximal unramified
abelian subcovering of the base change
$\ol{f}: \ol{Y}' \to \ol{Y}$ of $f$.
Let $\alpha : \ol{V} \to \ol{Y}'$ 
be a Galois closure of $\ol{f}$.
We have a commutative diagram
\[
\xymatrix{
Y' \ar[d] \ar@/_3em/[dd]_f &
\ol{Y}' \ar[d] \ar[l] &
\ol{V} \ar@/^3em/[ddl]^\gamma \ar[dl]^\beta \ar[l]_\alpha &
\\
Y^\uab_f \ar[d]^{f^\uab_Y} &
\ol{Y}^\uab_f \ar[d]^{\ol{f}^\uab_Y} \ar[l] &
&
\\
Y &
\ol{Y}. \ar[l] &
}
\]
The left two squares are Cartesian 
since $Y, Y'$
(and hence $Y^\uab_f$ too) 
are geometrically integral over $k$.
It follows that
$\Gal(f_Y^\uab) \cong \Gal(\ol{f}_Y^\uab)$.
%With the notation $(-)^\vee = \Hom(-, \Q/\Z(1))$,
Using \eqref{eq:hc-tj2} 
and \eqref{eq:pi-H1-duality},
we get 
\begin{align*}
\Sigma_D(f)^*(1)
&=\ker(M_\Tor \to M_\Tor')^*(1)
\\
&\cong \ker(H^1(\ol{Y}, \Q/\Z(1)) \to H^1(\ol{Y}', \Q/\Z(1)))^*(1)
\\
&\cong \ker(H^1(\ol{Y}, \Q/\Z) \to H^1(\ol{Y}', \Q/\Z))^*
\\
&\cong \coker(\pi_1(\ol{Y}')^\ab \to \pi_1(\ol{Y})^\ab)
\\
&\cong\Gal(\gamma)/\Gal(\beta)
\cong\Gal(\ol{f}^\uab_Y)
\cong\Gal(f^\uab_Y).
\end{align*}
%
%
%\begin{align*}
%\Sigma(f)^\vee
%&=\ker(\tJ_\Tor \to \tJ_\Tor')^\vee
%\\
%&\cong \ker(H^1_c(\ol{Y}, \Q/\Z(1)) \to H^1_c(\ol{Y}', \Q/\Z(1)))^\vee
%\\
%&\cong \coker(H^1(\ol{Y}', \hat{\Z}) \to H^1(\ol{Y}, \hat{\Z}))
%\\
%&\cong \coker(\pi_1(\ol{Y}')^\ab \to \pi_1(\ol{Y})^\ab)
%\\
%&\cong\Gal(\gamma)/\Gal(\beta)
%\cong\Gal(\ol{f}^\uab_Y)
%\cong\Gal(f^\uab_Y).
%\end{align*}
The last statement follows since
$G_k$ acts on $\Gal(f^\uab_Y)$ trivially.
\end{proof}

Note that this lemma implies
$\Sigma(f)=\Sigma(f_X^\uab)$ and
$\Sigma_D(f)=\Sigma_D(f_Y^\uab)$.

\begin{remark}\label{def:geomax}
Suppose that $f : X' \to X$
(resp. $f: Y' \to Y$) is a finite abelian unramified covering.
Then $f$ is
geometrically maximal
(see Definition \ref{def:geo-max-cov})
if and only if 
$\Sigma(f)=J_\Tor^\mu$ (resp. $\Sigma_D(f)=M_\Tor^\mu$).
\end{remark}

\begin{proposition}\label{prop:ex-gm}
Suppose that $k$ is finitely generated over its prime subfield.
Given $Y$,
there exists a geometrically maximal
abelian unramified covering $f: Y' \to Y$.
\end{proposition}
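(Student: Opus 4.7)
The plan is to use the splitting of the exact sequence \eqref{eq:ex-seq1} provided by Proposition \ref{prop:exact-seq} in order to extract a finite quotient of $\pi_1(Y)^{\ab}$ isomorphic to $\pi_1(Y)^{\ab, \geo}$. First, since $k$ is finitely generated over its prime subfield, Remark \ref{rem:KL} guarantees that $\pi_1(Y)^{\ab, \geo}$ is a finite abelian group.

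Next, I fix a continuous splitting $s : G_k^{\ab} \to \pi_1(Y)^{\ab}$ of \eqref{eq:ex-seq1}, which exists by Proposition \ref{prop:exact-seq}. The associated projection $\phi : \pi_1(Y)^{\ab} \twoheadrightarrow \pi_1(Y)^{\ab, \geo}$, namely $x \mapsto x \cdot s(\bar x)^{-1}$ where $\bar x$ denotes the image of $x$ in $G_k^{\ab}$, is a continuous surjection onto a finite group whose restriction to the subgroup $\pi_1(Y)^{\ab, \geo} \subset \pi_1(Y)^{\ab}$ is the identity. By the Galois correspondence for finite \'etale coverings, $\phi$ defines a finite abelian unramified covering $f : Y' \to Y$ equipped with an isomorphism $\Gal(Y'/Y) \cong \pi_1(Y)^{\ab, \geo}$.

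It then remains to verify that $f$ fulfills the two requirements of Definition \ref{def:geo-max-cov}. The covering $Y'$ is geometrically integral over $k$ because the image of $\pi_1(\ol{Y})^{\ab} \cong \pi_1(Y)^{\ab, \geo}$ under $\phi$ equals $\pi_1(Y)^{\ab, \geo}$; equivalently, $Y' \times_{\Spec k} \Spec \ol{k}$ remains connected. Moreover, by construction, the composition $\pi_1(Y)^{\ab, \geo} \hookrightarrow \pi_1(Y)^{\ab} \overset{\phi}{\twoheadrightarrow} \pi_1(Y)^{\ab, \geo}$ is the identity map, hence an isomorphism, which is exactly the criterion for $f$ to be geometrically maximal.

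I anticipate no serious obstacle, since all the substantive input—the finiteness of $\pi_1(Y)^{\ab, \geo}$ from \cite{KL} and the splitting of \eqref{eq:ex-seq1} from Proposition \ref{prop:exact-seq}—has been prepared in earlier subsections. The only mild caveat is that the splitting $s$ need only be a splitting of topological abelian groups and not of $G_k$-modules; this suffices for the argument, and no $G_k$-equivariance is actually required anywhere.
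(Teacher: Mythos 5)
Your proof is correct and follows exactly the paper's (one-line) argument: the paper deduces the proposition from the finiteness of $\pi_1(Y)^{\ab,\geo}$ (Remark \ref{rem:KL}) together with the splitting of \eqref{eq:ex-seq1} (Proposition \ref{prop:exact-seq}), which is precisely what you spell out via the retraction $x \mapsto x\cdot s(\bar x)^{-1}$. The only cosmetic imprecision is the phrase ``$\pi_1(\ol{Y})^{\ab} \cong \pi_1(Y)^{\ab,\geo}$''---the correct identification is with the $G_k$-coinvariants $(\pi_1(\ol{Y})^{\ab})_{G_k}$---but since only the image of $\pi_1(\ol{Y})^{\ab}$ in $\pi_1(Y)^{\ab}$ is needed for geometric integrality, this does not affect the argument.
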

\begin{proof}
This follows from Proposition \ref{prop:exact-seq}
and Remark \ref{rem:KL}.
\end{proof}

\section{Modular curves}\label{sec:modular}
In this section, we are going to prove Theorem \ref{thm:main1}.

\subsection{First reduction}
Let $p$ be a prime
and set $k=\Q, ~X=X_0(p), ~Y=Y_0(p)$.
Recall that $D=X \setminus Y$ 
consists of two $\Q$-rational points
(i.e. $0$ and $\infty$ cusps).
It follows
that $L_D=\Z$ with trivial $G_\Q$-action,
and hence $(L_D \otimes \Q/\Z)^\mu(-1) \cong \Z/2\Z$
 (see Example \ref{ex:two}).
By \eqref{eq:exact-mtor}, we get an exact sequcne
\[ 0 \to J_\Tor^\mu(-1) \to M_\Tor^\mu(-1) \to \Z/2\Z. \]
Since $J_\Tor^\mu(-1)$ is a cyclic group of order $N_p$
by Theorem \ref{thm:mazur},
the order of $M_\Tor^\mu(-1)$ is
either $N_p$ or $2N_p$.
In view of 
Proposition \ref{prop:exact-seq} and Lemma \ref{lem:tSg-mutype},
this implies that
if 
$\ul{f}_p' : X_2'(p) \to X_0(p)$ 
is a cyclic covering of degree $2N_p$ 
such that $X_2'(p)$ is geometrically integral
and such that 
the condition (2) in Theorem \ref{thm:main1} holds,
then it automatically satisfies (1) as well.
We are going to construct such $\ul{f}_p'$.
If $p=2$ or $3$, we may take 
$\ul{f}_p'$ to be the map $\G_{m, k} \to \G_{m, k}, ~x \mapsto x^2$
under the identification 
$Y_0(p) \cong \G_{m, k} =\P^1_k \setminus \{ 0, \infty \}$.
Below we assume $p \ge 5$.
We need a preparation. 

\subsection{Generalized Dedekind eta functions}\label{sec:ded-eta}
Let $N$ be a positive integer. For an integer $g$ not congruent to $0$
modulo $N$, define the 
{\it generalized Dedekind eta function} $E_g$ by
\begin{equation}\label{eq:ded-eta}
  E_g(\tau)=q^{NB(g/N)/2}\prod_{m=1}^\infty(1-q^{N(m-1)+g})(1-q^{Nm-g}),
\end{equation}
where $B(x)=x^2-x+1/6$ is the second Bernoulli polynomial
(see \cite{Schoeneberg}). Up to scalars, these functions are also
called Siegel functions (see \cite{Kubert-Lang}).
We have the following properties of $E_g$.

\begin{Proposition}[{\cite[Corollaries 2 and 3 and Lemma 2]{Yang}}]
    \label{proposition: Eg}
\begin{enumerate}
\item We have
\begin{equation}
\label{shifting for Eg}
  E_{g+N}=E_{-g}=-E_g.
\end{equation}
\item Let $\gamma=\begin{pmatrix}a&b\\ cN&d\end{pmatrix}\in
\Gamma_0(N)$. We have, for $c=0$,
$$
  E_g(\tau+b)=e^{\pi ibNB(g/N)}E_g(\tau),
$$
and, for $c\neq 0$,
$$
  E_g(\gamma\tau)=\epsilon(a,bN,c,d)e^{\pi i(g^2ab/N-gb)}
  E_{ag}(\tau),
$$
where
\begin{equation*}
  \epsilon(a,b,c,d)
  =\begin{cases}
    e^{2\pi i\left(bd(1-c^2)+c(a+d-3)\right)/12},
      &\text{if }c\text{ is odd}, \\
    -ie^{2\pi i\left(ac(1-d^2)+d(b-c+3)\right)/12},
      &\text{if }d\text{ is odd}.
  \end{cases}
\end{equation*}
\item Suppose that $\prod_gE_g^{e_g}$ is a product of generalized
  Dedekind eta functions satisfying
$$
  \sum_ge_g\equiv 0\mod 12, \qquad \sum_gge_g\equiv 0\mod 2, \qquad
  \sum_gg^2e_g\equiv 0\mod 2N.
$$
Then $\prod_gE_g^{e_g}$ is a modular function on $\Gamma_1(N)$.
Moreover, if $N$ is odd, then the conditions can be reduced to
$$
  \sum_ge_g\equiv 0\mod 12, \qquad \sum_gg^2e_g\equiv 0\mod N.
$$
\item Given a matrix
$$
  \gamma=\begin{pmatrix}a&b\\ c&d\end{pmatrix}\in\SL(2,\Z),
$$
the Fourier expansion of $E_g(\gamma\tau)$ starts from
$\zeta q^\delta+(\text{\rm higher powers})$, where $\zeta$ is a
root of unity and
$$
  \delta=\frac{(c,N)^2}{2N}P_2\left(\frac{ag}{(c,N)}\right),
$$
where 
$P_2(x)=\{x\}^2-\{x\}+1/6$ is the second Bernoulli function,
$\{ x \}$ denotes the fraction part of $x$,
and $(a, b):=\gcd(a, b)$.
\end{enumerate}
\end{Proposition}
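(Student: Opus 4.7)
The proof of all four parts ultimately reduces to classical transformation formulas for the Dedekind eta function and its generalizations.

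Part (1) is a direct calculation from the product definition \eqref{eq:ded-eta}. The key Bernoulli polynomial identity is $B(x+1) - B(x) = 2x = B(-x) - B(x)$, so in both the $E_{g+N}$ and $E_{-g}$ computations the leading exponent $NB(g/N)/2$ shifts by $g$. After reindexing the two infinite products one picks up exactly one extra factor $1 - q^{-g} = -q^{-g}(1-q^{g})$, and the $-q^{-g}$ absorbs both the exponent shift and contributes the sign $-1$.

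For part (2), the case $c=0$ (so $\gamma = T^b$) follows immediately from the substitution $q \mapsto q e^{2\pi i b}$ in \eqref{eq:ded-eta}, which only modifies the leading $q$-power by a phase. The case $c \neq 0$ is the heart of the matter: up to a well-understood normalization, $E_g(\tau)$ coincides with the Siegel (or Klein) function $g_{(g/N,\,0)}(\tau)$ of Kubert-Lang. One then invokes the classical $\SL(2,\Z)$-transformation $g_a(\gamma\tau) = \varepsilon_\gamma\cdot g_{a\gamma}(\tau)$, where $\varepsilon_\gamma$ is a $12$-th root of unity built from the Dedekind eta multiplier. The action $(g/N, 0)\cdot\gamma = (ag/N,\,bg/N)$ on the Siegel-function parameter produces the reindexing $E_g \mapsto E_{ag}$ together with the phase $e^{\pi i(g^2 ab/N - gb)}$, while $\varepsilon_\gamma$ specializes to the explicit expression $\epsilon(a,bN,c,d)$.

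Part (3) is then a formal consequence of (2) and the group-theoretic description of $\Gamma_1(N)$. A product $\prod_g E_g^{e_g}$ is modular on $\Gamma_1(N)$ exactly when the total multiplier from (2) is trivial for every $\gamma\in\Gamma_1(N)$. Decomposing this multiplier separates into three independent pieces,
\[
\epsilon(a,bN,c,d)^{\sum_g e_g}, \qquad e^{-\pi ib\sum_g g e_g}, \qquad e^{\pi i(ab/N)\sum_g g^{2}e_g},
\]
together with a sign $(-1)^{\sum_g e_g(ag-g)/N}$ arising from (1) when one reduces $ag$ back into the standard range. Demanding triviality of each piece for all admissible $(a,b,c,d)$ yields exactly the three stated congruences. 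For $N$ odd the $2$-adic parts of these conditions become partly automatic, leaving only the two congruences in the second assertion.

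Finally, part (4) follows by applying (2) — extended to a general $\gamma\in\SL(2,\Z)$ through the eta multiplier — and expanding $E_{ag}(\tau)$ in the local uniformizer at the cusp $\gamma\infty$: the leading exponent is the standard Siegel exponent $(c,N)^2 P_2(ag/(c,N))/(2N)$, with the \emph{periodic} Bernoulli function $P_2$ exactly accounting for the $N$-antiperiodicity in (1). The main technical obstacle throughout is part (2) for $c\neq 0$: a careful bookkeeping of $24$-th roots of unity in the eta multiplier system, equivalent to identifying $E_g$ with a division value of the Weierstrass sigma function on $\C/(\Z+\Z\tau)$ and tracking its $\SL(2,\Z)$-covariance. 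Once this is in hand, (1), (3), and (4) are essentially formal.
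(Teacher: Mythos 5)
The paper does not actually prove this proposition: it is quoted verbatim from the reference [Yang] (Corollaries 2, 3 and Lemma 2 there), so there is no internal argument to compare against. Your sketch reconstructs the standard derivation and is sound in outline. Part (1) is complete and correct: the identity $B(x+1)-B(x)=B(-x)-B(x)=2x$, the reindexing of the two products, and the factor $1-q^{-g}=-q^{-g}(1-q^{g})$ give exactly $E_{g+N}=E_{-g}=-E_g$. Part (3) does follow formally from (1) and (2) by the multiplier decomposition you describe; note only that the sign $(-1)^{((a-1)/N)\sum_g ge_g}$ coming from reducing $E_{ag}$ back to $E_g$ is killed by the \emph{same} condition $\sum_g ge_g\equiv 0\bmod 2$ that kills the phase $e^{-\pi i b\sum_g ge_g}$, and that the $c=0$ elements of $\Gamma_1(N)$ must be checked separately via the first formula in (2) (both points are easy but worth making explicit). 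Part (4) is the standard cusp-expansion computation and your reduction is correct. The one place where your argument is not yet a proof is the core of part (2) for $c\neq 0$: you assert that the Kubert--Lang Siegel-function multiplier ``specializes to the explicit expression $\epsilon(a,bN,c,d)$'' without carrying out that specialization. That explicit $24$th root of unity is precisely the nontrivial content of the cited reference, and the paper later depends on its exact value (the identity $\epsilon(a,bp,c,d)^6=\psi(\gamma)$ used in proving Proposition \ref{prop:final}), so the normalization and sign conventions cannot be waved through; as written, your part (2) is a correct reduction to the literature rather than an independent proof. Everything else is fine.
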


\subsection{The modular curve $X_2'(p)$}
Let $p$ be a prime with $p\ge 5$ and $g$ be an odd generator of $(\Z/p\Z)^\times$.
To ease the notation, set
$$
  k=N_p=\frac{p-1}{(p-1,12)}, \qquad \ell=\frac{(p-1,12)}2.
$$
%where $(a, b):=\gcd(a, b)$.
Let $\Gamma_2(p)$ be the group generated by $\Gamma_1(p)$ and any
matrix of the form $\SM{g^k}\ast p\ast$ and $X_2(p)$ be the
corresponding modular curve.
By \cite[\S II, 2]{Mazur}, $X_2(p)\to X_0(p)$ is the maximal
unramified subcover of $X_1(p)\to X_0(p)$. 
We have
$$
  [\Gamma_0(p):\Gamma_2(p)]=k, \qquad [\Gamma_2(p):\Gamma_1(p)]=\ell.
$$

For an integer $h$ not congruent to $0$ modulo $p$, 
using \eqref{eq:ded-eta}, we set 
\begin{equation}\label{eq:f-h}
  F_h(\tau)=\left(\prod_{j=0}^{\ell-1}E_{g^{jk}h}(\tau)\right)^{6/\ell}.
\end{equation}

\begin{Lemma} \label{lemma: shifting for Fh}
The functions $F_h$ have the following properties.
\begin{enumerate}
\item $F_h=F_{-h}=F_{p+h}$.
\item $F_{g^kh}=\begin{cases} -F_h, &\text{if }p\equiv 1\mod 4, \\
  F_h, &\text{if }p\equiv 3\mod 4.\end{cases}
  $
\end{enumerate}
\end{Lemma}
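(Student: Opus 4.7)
The plan is to verify both parts by directly applying the two basic sign identities $E_{-g}=-E_g$ and $E_{g+p}=-E_g$ from Proposition \ref{proposition: Eg}(1) to each factor in the product defining $F_h$, and then to track the total sign through the outer exponent $6/\ell$.

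For part (1), I will first observe that $E_{-g^{jk}h}=-E_{g^{jk}h}$ introduces a factor $(-1)^\ell$ inside the inner product, which becomes $(-1)^6=1$ after raising to the $6/\ell$-th power, proving $F_{-h}=F_h$. For $F_{p+h}$, I write $g^{jk}(p+h)=g^{jk}h+g^{jk}p$ and iterate $E_{a+p}=-E_a$ a total of $g^{jk}$ times, which produces $(-1)^{g^{jk}}$ per factor. Since $g$ is odd, each $g^{jk}$ is odd, so this sign is again $-1$, and the same cancellation gives $F_{p+h}=F_h$.

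For part (2), I reindex the product and isolate the $j=\ell$ term, so that all factors with $1\le j\le \ell-1$ cancel against the corresponding factors in $F_h$ and leave
\[
\frac{F_{g^k h}}{F_h}=\left(\frac{E_{g^{\ell k}h}}{E_h}\right)^{6/\ell}.
\]
Since $g$ generates $(\Z/p\Z)^\times$, $g^{\ell k}=g^{(p-1)/2}\equiv -1\pmod{p}$, so I write $g^{\ell k}=mp-1$ for some positive integer $m$. The crucial parity observation is that $g^{\ell k}$ is odd (as $g$ is odd) and $p$ is odd, so $mp=g^{\ell k}+1$ being even forces $m$ itself to be even. Then, applying $E_{a+p}=-E_a$ iteratively and $E_{-h}=-E_h$,
\[
E_{g^{\ell k}h}=E_{-h+mph}=(-1)^{mh}E_{-h}=-E_h,
\]
so $F_{g^k h}/F_h=(-1)^{6/\ell}$. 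I would conclude by a case analysis on $(p-1,12)\in\{2,4,6,12\}$: we have $4\mid (p-1,12)$ iff $p\equiv 1\pmod{4}$ iff $\ell\in\{2,6\}$ iff $6/\ell\in\{3,1\}$ is odd, in which case the sign is $-1$; otherwise $\ell\in\{1,3\}$ and $6/\ell\in\{6,2\}$ is even, giving sign $+1$.

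The step I expect to be most delicate is the parity argument forcing $m$ to be even; without it, $(-1)^{mh}$ would depend on $h$ and the uniform sign asserted in part (2) would fail. Everything else is routine bookkeeping with the two sign identities for $E_g$.
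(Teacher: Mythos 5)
Your proof is correct and follows essentially the same route as the paper's: both reduce part (2) to the single ratio $\left(E_{g^{\ell k}h}/E_h\right)^{6/\ell}$ after telescoping, use $g^{\ell k}\equiv -1 \pmod p$ together with the parity of $(g^{\ell k}+1)/p$ to get $E_{g^{\ell k}h}=-E_h$, and settle the final sign by the parity of $6/\ell$. The only difference is cosmetic: you make explicit the evenness of $m=(g^{\ell k}+1)/p$, which the paper leaves implicit in the exponent $(g^{k\ell}+1)h/p$.
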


\begin{proof} The fact that $F_h=F_{-h}$ follows immediately from
  \eqref{shifting for Eg} in Proposition \ref{proposition: Eg} since
  $F_h$ is a product of $6$ generalized Dedekind eta functions.
  By the same property of the generalized Dedekind eta functions, we
  have
  $$
    F_{p+h}=\left(\prod_{j=0}^{\ell-1}E_{g^{jk}(p+h)}\right)^{6/\ell}
  =(-1)^{6(1+g^k+\cdots+g^{(\ell-1)k})/\ell}F_h
  $$
  Since $g$ is assumed to be odd, we find that $F_{p+h}=F_h$. We next
  prove Part (2).

  By the definition of $F_h$, we have
  $$
    F_{g^kh}=\left(\frac{E_{g^{k\ell}h}}{E_h}\right)^{6/\ell}F_h.
  $$
  Now $g^{k\ell}=g^{(p-1)/2}\equiv -1\mod p$. Hence from \eqref{shifting
    for Eg} in Proposition \ref{proposition: Eg} again, we find that
  $$
    E_{g^{k\ell}h}=(-1)^{(g^{k\ell}+1)h/p}E_{-h}=-E_h
  $$
  It follows that
  $$
    F_{g^kh}=(-1)^{6/\ell}F_{h}=\begin{cases}
    -F_{h}, &\text{if }p\equiv 1\mod 4, \\
    F_h, &\text{if }p\equiv 3\mod 4.\end{cases}
  $$
  This completes the proof.
\end{proof}

%In the next lemma, we assume that $\ell\neq 1$, i.e., that
%$p\not\equiv 11\mod 12$. Also, 
Let $\psi$ be the character of order
$2$ of $\SL(2,\Z)$ defined by
$$
  \psi(\gamma)=\begin{cases}
  (-1)^{a+d-1}, &\text{if }c\text{ is odd}, \\
  (-1)^b, &\text{if }c\text{ is even}, \end{cases}
$$
for $\gamma=\SM abcd\in\SL(2,\Z)$. In addition, if $\ell$ is even,
i.e., if $p\equiv 1\mod 4$, there is a unique character $\chi$ of
order $2$ of $\Gamma_2(p)$ with
$\Gamma_1(p)\subset\operatorname{ker}\chi$. Explicitly, for
$\gamma=\SM abcd\in\Gamma_2(p)$, let $n$ be an integer such that
$a\equiv g^{nk}\mod p$. Then $\chi(\gamma)$ is equal to $(-1)^n$.
Note that if $p\equiv 5\mod 8$, then $k$ is odd and $\chi$ is simply
the character $\SM abcd\mapsto\JS dp$ of nebentype.

Let
  $\Gamma_2'(p)$ be the kernel of the character
  $$
    \begin{cases}
    \psi\chi, &\text{if }p\equiv 1\mod 4, \\
    \psi, &\text{if }p\equiv 3\mod 4
    \end{cases}
  $$
  on $\Gamma_2(p)$.
  
\begin{lemma} 
The group $\Gamma_2'(p)$ is a normal subgroup of
  $\Gamma_0(p)$ and $\Gamma_0(p)/\Gamma_2'(p)$ is cyclic of order $2k$.
\end{lemma}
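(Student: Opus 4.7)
I would exhibit a surjective character $\wt\eta \colon \Gamma_0(p) \to \Z/2k\Z$ whose kernel is $\Gamma_2'(p)$; both normality and cyclicity of order $2k$ then follow instantly, as the kernel of a homomorphism into an abelian group is normal and the quotient is the image. The two building blocks are the mod-$p$ character
\[
\alpha\colon \Gamma_0(p) \twoheadrightarrow (\Z/p\Z)^\times/\{\pm 1\}\cong \Z/k\ell\Z, \qquad \SM{a}{b}{c}{d} \mapsto a,
\]
with kernel $\Gamma_1(p)$ and satisfying $\Gamma_2(p) = \alpha^{-1}(\langle g^k\rangle)$, together with the restriction of $\psi$ to $\Gamma_0(p)$. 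The construction of $\wt\eta$ splits by the parity of $p \bmod 4$.

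\textbf{Case $p \equiv 3 \pmod 4$.} A direct check of $(p-1,12) \in \{2,6\}$ shows both $\ell \in \{1,3\}$ and $k$ are odd. Letting $\bar\alpha$ be $\alpha$ reduced modulo $k$ (so $\ker \bar\alpha = \Gamma_2(p)$), I would set
\[
\wt\eta := (\bar\alpha, \psi) \colon \Gamma_0(p) \longrightarrow \Z/k\Z \oplus \Z/2\Z \cong \Z/2k\Z,
\]
the final isomorphism being CRT applied to $\gcd(k,2)=1$. The kernel is plainly $\Gamma_2(p) \cap \ker\psi = \Gamma_2'(p)$. Surjectivity reduces to $[\Gamma_2(p):\Gamma_2'(p)]=2$, which follows from the nontriviality of $\psi$ on $\Gamma_1(p) \subset \Gamma_2(p)$: e.g.\ $\psi(\SM{1}{p}{0}{1}) = (-1)^p = -1$.

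\textbf{Case $p \equiv 1 \pmod 4$.} Now $\ell \in \{2,6\}$ is even, so $2k \mid k\ell$ and the composition $\alpha' := \alpha \bmod 2k$ defines a character $\Gamma_0(p) \to \Z/2k\Z$. For $\gamma \in \Gamma_2(p)$ with $a \equiv g^{nk}$, one has $\alpha'(\gamma) = nk \bmod 2k$, equal to $0$ or $k$ according to the parity of $n$; writing $\chi = (-1)^{\chi'}$ and $\psi = (-1)^{\psi'}$ with $\chi',\psi' \in \{0,1\}$, this says $\alpha'|_{\Gamma_2(p)} = k\chi'$. I would then set
\[
\wt\eta := \alpha' + k\psi' \colon \Gamma_0(p) \longrightarrow \Z/2k\Z.
\]
Reducing modulo $k$ gives $\alpha \bmod k$, whose kernel is $\Gamma_2(p)$, forcing $\ker\wt\eta \subset \Gamma_2(p)$; on $\Gamma_2(p)$, $\wt\eta = k(\chi'+\psi')$ vanishes in $\Z/2k\Z$ precisely when $\chi(\gamma)\psi(\gamma)=1$, i.e.\ $\gamma \in \Gamma_2'(p)$. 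Surjectivity again follows by the index count $[\Gamma_0(p):\Gamma_2'(p)] = k \cdot 2 = 2k$, once one checks (via the same $\SM{1}{p}{0}{1}$) that $\psi\chi$ is nontrivial on $\Gamma_2(p)$.

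\textbf{Main obstacle.} The substantive difficulty is the case $p \equiv 1 \pmod 4$: the character $\chi$ does not extend to $\Gamma_0(p)$ as an order-$2$ character, so the naive CRT construction of the first case breaks down. The resolution is to lift $\chi$ into the cyclic group of order $2k$ via the reduction $\alpha'$ of $\alpha$ — a lift that exists precisely because $\ell$ is even — and then twist by $\psi$ so that the combined kernel cuts out exactly the index-$2$ subgroup $\Gamma_2'(p)$ of $\Gamma_2(p)$. Once the construction is in place, the remaining verifications are a matter of reducing modulo $k$ and a routine index count.
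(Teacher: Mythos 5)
Your construction is correct and is essentially the paper's own argument written additively: the paper fixes a character $\rho$ of $\Gamma_0(p)$ of order $(p-1)/2$ with kernel $\Gamma_1(p)$ and identifies $\Gamma_2'(p)$ as the kernel of the order-$2k$ character $\rho^\ell\psi$ ($\ell$ odd) resp.\ $\rho^{\ell/2}\psi$ ($\ell$ even), which coincide with your $(\bar\alpha,\psi)$ and $\alpha'+k\psi'$. (One cosmetic quibble in your commentary only: when $p\equiv 5\bmod 8$ the character $\chi$ \emph{does} extend to an order-$2$ character of $\Gamma_0(p)$ --- the nebentypus, as the paper remarks --- so the ``obstacle'' you describe really arises only for $p\equiv 1\bmod 8$; this does not affect the proof.)
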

\begin{proof}
Let $\rho$ be a character on $\Gamma_0(p)$ of
order $(p-1)/2$ with kernel $\Gamma_1(p)$.
By definition, $\Gamma_2'(p)$ is the
kernel of the character $\rho^\ell \psi$ (resp. $\rho^{\ell/2} \chi$) 
if $\ell$ is odd (resp. even)
on $\Gamma_0(p)$ of order $2k$.
\end{proof}

Combined with this lemma,
Theorem \ref{thm:main1} follows from the following result:

\begin{proposition}\label{prop:final}
The modular curve $X_2'(p)$ associated to $\Gamma_2'(p)$
admits a geometrically integral model over $\Q$
and the covering $\ul{f}_p' : X_2'(p)\to X_2(p)$ ramifies precisely at each cusp.
\end{proposition}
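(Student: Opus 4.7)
The plan is to construct $X_2'(p) \to X_2(p)$ as a Kummer cover of degree two by exhibiting a modular function $\Phi$ on $\Gamma_2'(p)$, defined over $\Q$ up to sign, such that $\Phi^2$ descends to a rational function on $X_2(p)$ over $\Q$. The function field $\Q(X_2(p))(\Phi) = \Q(X_2(p))(\sqrt{\Phi^2})$ will then realise $X_2'(p)$, and the cover ramifies at a geometric point $P$ of $X_2(p)$ precisely when $\ord_P(\Phi^2)$ is odd. Thus the proposition reduces to three checks: (a)~$\Phi^2$ lies in $\Q(X_2(p))$; (b)~$\Phi^2$ is not a square in $\overline{\Q}(X_2(p))$; and (c)~$\div(\Phi^2)$ on $X_2(p)$ is supported on the cusps with odd multiplicity at every cusp.

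The function $\Phi$ will be built as an explicit monomial in the Siegel functions $E_g$ of \S\ref{sec:ded-eta} (equivalently, in the $F_h$ up to suitable powers). Its exponents $e_g$ must be chosen so that (i)~$\Phi^2$ satisfies the congruences of Proposition~\ref{proposition: Eg}(3), ensuring modularity on $\Gamma_1(p)$; (ii)~$\Phi^2$ is further invariant under the extra generator $\SM{g^k}{*}{p}{*}$ of $\Gamma_2(p)$, which will follow from Lemma~\ref{lemma: shifting for Fh}(2) after squaring; and (iii)~$\Phi$ itself is invariant under $\Gamma_2'(p)$ but not under $\Gamma_2(p)$, the failure being given exactly by the character $\psi\chi$ (or $\psi$) defining $\Gamma_2'(p)$. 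The precise monomial depends on the invariant $\ell = (p-1,12)/2 \in \{1,2,3,6\}$: for $\ell \ge 2$ one may simply take $\Phi^2 = F_1^2$, while for $\ell = 1$ (i.e.~$p\equiv 3\pmod 4$ with $p \not\equiv 1 \pmod 3$) one must use a product of two or more $F_h$'s to fulfil the congruence $\sum g^2 e_g \equiv 0 \pmod p$. Rationality of $\Phi^2$ over $\Q$ is immediate from the integrality of the $q$-expansions of the $E_g$'s and the $q$-expansion principle at the $\Q$-rational $\infty$-cusp of $X_2(p)$.

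Next, I would compute $\div(\Phi^2)$ on $X_2(p)$. Since each $E_g$ is a modular unit, the divisor is automatically supported on cusps. The $2N_p$ cusps of $X_2(p)$ split into two orbits above $\infty, 0 \in X_0(p)$, each parametrised by the $N_p$ cosets of $\langle g^k\rangle$ in $(\Z/p\Z)^\times$. For each cusp represented by a matrix $\gamma \in \SL(2,\Z)$, Proposition~\ref{proposition: Eg}(4) gives the leading $q$-exponent of every factor of $\Phi^2(\gamma\tau)$ as an explicit value of the periodic Bernoulli function $P_2$; summing with the exponents $2e_g$ and converting to the local parameter of $X_2(p)$ at that cusp yields $\ord_P(\Phi^2)$. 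The decisive claim is that this order is \emph{odd} at every cusp. Granted the parity, $\Phi^2$ is not a square in $\overline{\Q}(X_2(p))$, the Kummer cover is geometrically integral, coincides with $X_2'(p)$ by the index $[\Gamma_2(p):\Gamma_2'(p)]=2$, and ramifies precisely on the cusp locus.

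The main obstacle is this parity assertion on the cusp orders. It rests crucially on $g$ being an \emph{odd} generator of $(\Z/p\Z)^\times$ -- a condition exploited throughout Lemma~\ref{lemma: shifting for Fh} -- together with a combinatorial identity for sums of $P_2$-values over cosets of $\langle g^k\rangle$ in $(\Z/p\Z)^\times$. Bookkeeping these Bernoulli sums cleanly across the residue classes of $p\pmod{12}$, and thereby pinning down the correct monomial $\Phi$ in each case, is where the bulk of the technical effort will reside.
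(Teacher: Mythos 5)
Your plan coincides with the paper's own proof. For $\ell\ge 2$ the paper takes exactly your $\Phi^2=F_h^2$, uses Proposition \ref{proposition: Eg}(2) and Lemma \ref{lemma: shifting for Fh} to show that $F_h$ transforms under $\Gamma_2(p)$ by precisely the character $\psi\chi$ (resp.\ $\psi$) cutting out $\Gamma_2'(p)$, deduces $F_h^2\in\Q(X_2(p))$ from the rational $q$-expansion, and then uses Proposition \ref{proposition: Eg}(4) to show $\ord(F_h^2)$ is odd at every cusp and zero elsewhere --- exactly your criteria (a)--(c).

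Two caveats. First, the parity assertion you defer as ``technical effort'' is the entire mathematical content; in the paper it reduces to the congruence $\tfrac p2\bigl(\{x/p\}^2-\{x/p\}\bigr)\equiv\tfrac p2\bigl(x^2/p^2-x/p\bigr)\pmod 1$ together with $\sum_{j=0}^{\ell-1}g^{2jk}\equiv 0\pmod p$ (valid only for $\ell>1$), whence each cusp order is $\equiv p\equiv 1\pmod 2$; the width-$p$ cusps are handled separately by counting that $F_h^2$ is a product of exactly $12$ functions $E_{g^{jk}h}$, each contributing $q^{1/12p}$. Second, in the case $\ell=1$ your suggestion of ``a product of two or more $F_h$'s'' cannot be realized with \emph{two}: since $F_h=E_h^6$ here, the congruence $\sum_g g^2e_g\equiv 0\pmod p$ would force $h_1^2+h_2^2\equiv 0\pmod p$, impossible for $p\equiv 3\pmod 4$. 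One genuinely needs three indices with $h_1^2+h_2^2+h_3^2\equiv 0\pmod p$; the paper uses $G_h=(E_{h_1}E_{h_2}E_{h_3})^2$ (note also that a product of two or three full $F_h$'s would have $\sum_g e_g\equiv 0$ or $6\pmod{12}$, so one must check which power is $\Gamma_1(p)$-modular and which merely transforms by $\psi$). These are repairable details, not flaws in the strategy.
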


We first prove the proposition assuming $p\not\equiv 11\mod 12$. 

\begin{Lemma}
Assume that $\ell\neq 1$, i.e., that $p\not\equiv11\mod 12$. Let $h$
be an integer not congruent to $0$ modulo $p$.
\begin{enumerate}
\item For $\gamma=\SM abcd\in\Gamma_0(p)$, we have $F_h(\gamma\tau)=\psi(\gamma)F_{ah}(\tau)$.
\item Let $\gamma=\SM abcd\in\Gamma_2(p)$. Then
  $$
    F_h(\gamma\tau)=\begin{cases}
    \psi(\gamma)\chi(\gamma)F_h(\tau), &\text{if }p\equiv 1\mod 4, \\
    \psi(\gamma)F_h(\tau), &\text{if }p\equiv 3\mod 4.\end{cases}
  $$
\item For any $h$, $F_h^2$ is a modular function on $X_2(p)$ defined over $\Q$.
\item The order of $F_h^2$ at any cusp of $X_2(p)$ is odd,
and is zero elsewhere.
\end{enumerate}
\end{Lemma}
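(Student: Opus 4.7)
\textbf{Overall strategy.} Parts (1) and (4) are the substantive calculations; parts (2) and (3) are corollaries of (1) together with Lemma \ref{lemma: shifting for Fh}. The plan is to unwind the definition \eqref{eq:f-h} of $F_h$ into a product of $\ell$ copies of $E_{g^{jk}h}$ raised to the $6/\ell$ power, apply Proposition \ref{proposition: Eg} termwise, and keep careful track of the accumulated roots of unity.

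\textbf{Part (1).} For $\gamma=\SM abcd \in \Gamma_0(p)$ with $c\neq 0$, write $c=pc_1$ and apply Proposition \ref{proposition: Eg}(2) to each factor, obtaining
\[
F_h(\gamma\tau)
=\epsilon(a,bp,c,d)^{6}
\exp\!\Bigl(\tfrac{6\pi i}{\ell}\sum_{j=0}^{\ell-1}\bigl(\tfrac{(g^{jk}h)^2 ab}{p}-g^{jk}hb\bigr)\Bigr)
\prod_{j=0}^{\ell-1}E_{ag^{jk}h}(\tau)^{6/\ell}.
\]
Since $a\in(\Z/p\Z)^\times$ the last product is $F_{ah}(\tau)$ by the re\"indexing $j\mapsto j$ inside the coset $\{g^{jk}(ah)\}$. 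Thus the task is purely arithmetic: show that the prefactor collapses to $\psi(\gamma)$. Here I would split into the parity cases of Proposition \ref{proposition: Eg}(2) and use that $g$ is odd, that $g^{\ell k}\equiv -1\pmod{p}$, and that the geometric progressions $\sum_{j}g^{jk}$ and $\sum_{j}g^{2jk}$ satisfy congruences mod $p$ (the latter vanishes mod $p$ when $\ell\ne1$, because $g^{2k}$ is a non-trivial $\ell$-th root of unity in $\F_p^\times$). This last vanishing is exactly where the hypothesis $\ell\ne 1$ (i.e.\ $p\not\equiv 11\bmod 12$) enters. The case $c=0$ is easier and reduces to the translation formula in Proposition \ref{proposition: Eg}(2). \emph{This bookkeeping of roots of unity is the main obstacle of the entire lemma.}

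\textbf{Part (2).} For $\gamma\in\Gamma_2(p)$, the definition of $\Gamma_2(p)$ gives $a\equiv g^{nk}\pmod{p}$ for some integer $n$, and by construction $\chi(\gamma)=(-1)^n$ when $\ell$ is even. Apply (1) to get $F_h(\gamma\tau)=\psi(\gamma)F_{ah}(\tau)=\psi(\gamma)F_{g^{nk}h}(\tau)$, and then iterate Lemma \ref{lemma: shifting for Fh}(2) $n$ times: this contributes a factor $(-1)^n=\chi(\gamma)$ if $p\equiv 1\pmod 4$ and nothing if $p\equiv 3\pmod 4$.

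\textbf{Part (3).} From (2) the function $F_h^2$ satisfies $F_h^2(\gamma\tau)=F_h^2(\tau)$ for all $\gamma\in\Gamma_2(p)$, since $\psi$ and (when present) $\chi$ have order $2$. Hence $F_h^2$ is a modular function on $X_2(p)$. For the $\Q$-rationality I would inspect the Fourier expansion at the cusp $\infty$: each $E_g$ has the explicit $q$-series \eqref{eq:ded-eta} with integer coefficients, so $F_h^2=\prod_j E_{g^{jk}h}^{12/\ell}$ has $q$-expansion of the form $q^{d}\cdot(\text{integer series})$ for some $d\in\tfrac{1}{n}\Z$; using (2) above it is automatic that $d\in\Z$, and then rationality of the Fourier coefficients at one cusp suffices for rationality over $\Q$.

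\textbf{Part (4).} To compute orders at cusps, represent each cusp of $X_2(p)$ by a matrix $\gamma=\SM abcd\in \SL_2(\Z)$ and apply Proposition \ref{proposition: Eg}(4) to each $E_{g^{jk}h}(\gamma\tau)$. The order of $F_h^2$ at the corresponding cusp is
\[
2\cdot\frac{6}{\ell}\sum_{j=0}^{\ell-1}\frac{(c,p)^2}{2p}P_{2}\!\Bigl(\frac{ag^{jk}h}{(c,p)}\Bigr).
\]
There are essentially two cases, $(c,p)=1$ and $(c,p)=p$. In each case I would simplify the sum using the symmetry $P_2(x)=P_2(1-x)$ together with the fact that $\{g^{jk}h\bmod p\}$ is a coset of the index-$2\ell$ subgroup $\langle g^{2k}\rangle\subset\F_p^\times$ combined with its negative. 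The desired statement is that this quantity is a non-zero odd integer; the non-vanishing away from cusps is immediate from the product formula for $E_g$ (no zeros or poles in the upper half plane). The main difficulty here, after the $q$-expansion bookkeeping, is verifying that each such order is odd; this should follow from a parity analysis of $P_2$-sums over cosets modulo $p$.
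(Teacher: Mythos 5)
Your plan tracks the paper's own proof almost step for step: Parts (1)--(3) are handled exactly as in the paper (termwise application of Proposition \ref{proposition: Eg}(2), the identity $\epsilon(a,bp,c,d)^6=\psi(\gamma)$, the vanishing of $\sum_{j=0}^{\ell-1}g^{2jk}$ modulo $p$ precisely because $g^{2k}$ has exact order $\ell\neq 1$ in $\F_p^\times$, then Lemma \ref{lemma: shifting for Fh}(2) for Part (2) and invariance plus rational $q$-coefficients for Part (3)). One point you should make explicit in Part (1): it is not enough that $p$ divides $S=\frac{6abh^2}{\ell}\sum_j g^{2jk}$; you need $2p\mid S$, which the paper gets from the observation that either $6/\ell$ is even or $\sum_j g^{2jk}$ is even (and similarly the linear terms $-g^{jk}hb$ contribute an even integer because $g$ is odd). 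Your ``bookkeeping of roots of unity'' gestures at this but the parity half of the argument is where it could silently fail.

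Part (4) contains a concrete error and defers the key computation. Your displayed expression for the order of $F_h^2$ at a cusp is just twice the $q$-exponent $\delta$ of Proposition \ref{proposition: Eg}(4); it omits multiplication by the cusp width ($p$ for cusps $a/c$ with $p\nmid c$, and $1$ for $p\mid c$). As written, for a cusp with $p\nmid c$ your formula gives $2\cdot\frac{6}{\ell}\cdot\ell\cdot\frac{1}{12p}=\frac1p$, which is not even an integer; with the width factor it is $1$, which is the paper's conclusion. For the cusps with $p\mid c$, the oddness of the order is the actual content of the statement, and you only assert that it ``should follow from a parity analysis''; the paper does this by showing $\frac p2\bigl(\{x/p\}^2-\{x/p\}\bigr)\equiv\frac{x^2}{2p}-\frac x2 \pmod 1$ and then using $\sum_j g^{2jk}\equiv 0\pmod p$ once more, together with $g$ odd, to reduce the total to $\frac{12}{\ell}\sum_j\frac p{12}=p\equiv 1\pmod 2$. (Minor slip: the subgroup $\langle g^{2k}\rangle$ of $\F_p^\times$ has order $\ell$ and index $2k$, not $2\ell$.) These gaps are fixable by routine means, but as submitted Part (4) does not establish the claim.
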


\begin{proof}
  Assume that $\ell\neq 1$, i.e., that $p\not\equiv
  11\mod 12$. Let $\gamma=\SM ab{cp}d$ be a matrix in $\Gamma_0(p)$. 
  By Part (2) of Proposition \ref{proposition: Eg}, for any
  $j=0,\ldots,\ell-1$,
  $$
    E_{g^{jk}h}(\gamma\tau)=\epsilon(a,bp,c,d)
    e^{\pi i(g^{2jk}h^2ab/p-g^{jk}hb)}E_{g^{jk}ah}(\tau).
  $$
  Notice that $\epsilon(a,bp,c,d)$ is a $12$th root of unity
  independent of $h$ and $jk$. Thus,
  $$
    F_h(\gamma\tau)=\epsilon(a,bp,c,d)^6e^{2\pi iS/2p}F_{ah}(\tau),
  $$
  where
  $$
    S=\frac{6abh^2}\ell\sum_{j=0}^{\ell-1}g^{2jk}
  $$
  We check directly from the definition of $\epsilon$ that
  $$
    \epsilon(a,bp,c,d)^6=\epsilon(a,b,cp,d)^6
   =\psi\left(\M ab{cp}d\right)=\psi(\gamma).
  $$
  Since $\ell$ is assumed to be greater than $1$, we have
  $$
    \sum_{j=0}^{\ell-1}g^{2jk}\equiv 0\mod p.
  $$
  Also, $S$ is even since either $6/\ell$ is even or
  $\sum_{j=0}^{\ell-1}g^{2jk}$ is even. We conclude that $(2p)|S$ and
  $$
    F_h(\gamma\tau)=\psi(\gamma)F_{ah}(\tau).
  $$

  We next prove Part (2).
  Assume that $\gamma=\SM abcd\in\Gamma_2(p)$. By Part (1),
  $F_h(\gamma\tau)=\psi(\gamma)F_{ah}(\tau)$. Let $n$ be a
  nonnegative integer such that $a\equiv g^{nk}\mod p$. By Lemma
  \ref{lemma: shifting for Fh}, we have
  $$
    F_{ah}=F_{g^{nk}h}=\begin{cases}
    (-1)^nF_h=\chi(\gamma)F_h, &\text{if }p\equiv 1\mod 4, \\
    F_h, &\text{if }p\equiv 3\mod 4.\end{cases}
  $$
  This yields the formula in Part (2). 
  The first statement of Part (3) is an immediate consequence of Part (2),
  and the second statement follows immediately from the definition \eqref{eq:f-h}.

  We now prove Part (4).
  The statement for non-cusp points are obvious from 
  the definition \eqref{eq:f-h}.
  Let $a/c$, $(a,c)=1$, be a cusp of $X_2(p)$. Consider first the case
  $p\nmid c$. Let $\gamma=\SM abcd$ be a matrix in $\SL(2,\Z)$. By
  Part (4) of Proposition \ref{proposition: Eg}, the Fourier expansion
  of $E_{g^{jk}h}$ starts from the term $q^{1/12p}$ for any $j$. Since
  such a cusp has width $p$ and $F_h^2$ is the product of exactly $12$
  $E_{g^{jk}h}$, the order of $F_h$ at $a/c$ is $1$.

  Now consider the case $p|c$. Such a cusp has width $1$. By Part (4)
  of Proposition \ref{proposition: Eg}, the order of $F_h$ at $a/c$ is
  $$
    \frac{12}\ell\sum_{j=0}^{\ell-1}\frac p2\left(
    \left\{\frac{ag^{jk}h}p\right\}^2-\left\{\frac{ag^{jk}k}p\right\}
    +\frac16\right).
  $$
  Observe that for any integer $x$, if we let
  $n=\lfloor x/p\rfloor$, then
  $$
    \left\{\frac xp\right\}^2-\left\{\frac xp\right\}
   =\left(\frac xp-n\right)^2-\frac xp+n
   =\frac{x^2}{p^2}-\frac xp-2n\frac xp+n^2+n,
  $$
  and hence
  $$
   \frac p2\left(\left\{\frac xp\right\}^2-\left\{\frac
       xp\right\}\right)
  \equiv \frac p2\left(\frac{x^2}{p^2}-\frac xp\right)\mod 1.
  $$
  It follows that
  $$
    \frac{12}\ell\sum_{j=0}^{\ell-1}\frac p2\left(
    \left\{\frac{ag^{jk}h}p\right\}^2-\left\{\frac{ag^{jk}k}p\right\}
    +\frac16\right)\equiv \frac{12}\ell\sum_{j=0}^{\ell-1}
    \frac p{12}=p\equiv 1 \mod 2
  $$
  That is, the order of $F_h^2$ at the cusp $a/c$ is odd. This
  completes the proof.
\end{proof}

It follows from the lemma that
$\Q(X_2'(p))$ is a quadratic extension of $\Q(X_2(p))$
generated by a square root of $F_h^2 \in \Q(X_2(p))$,
that $\Q$ is algebraically closed in $\Q(X_2'(p))$,
and that the covering $X_2'(p) \to X_2(p)$
ramifies exactly at each cusp,
showing Proposition \ref{prop:final} in this case.

To show the proposition for the case $p\equiv 11\mod 12$,
we need 
a slightly different construction of modular functions.
In this case, $\Gamma_2(p)=\Gamma_1(p)$. 
Let $h=(h_1,h_2,h_3)$ be any triplet
of integers not congruent to $0$ modulo $p$ such that
$h_1^2+h_2^2+h_3^2\equiv 0\mod p$ and define
$G_h=(E_{h_1}E_{h_2}E_{h_3})^2$. Then $G_h^2$ is a modular function on
$X_1(p)$ by Proposition \ref{proposition: Eg}. Also, by the same
computation as above, the order of $G_h^2$ at each cusp is odd. In
addition, we can verify as above that for $\gamma=\SM
abcd\in\Gamma_1(p)$,
$$
  G_h(\gamma\tau)=\psi(\gamma)G_h.
$$
Therefore, the proposition holds for the case $p\equiv 11\mod 12$.
\qed

\begin{Remark} If $p\not\equiv 1\mod 8$, then
$$
  z(\tau)=\left(\frac{\eta(\tau)}{\eta(p\tau)}\right)^{12/(p-1,12)}
$$
is also a modular function on $\Gamma_2'(p)$, but not on
$\Gamma_2(p)$. Thus, the function field of $X_2'(p)$ can be obtained
by adjoining either $F_h(\tau)$ or $z(\tau)$. The relation betwen
$F_h$ and $z$ is
$$
  z(\tau)=\pm\prod_{j=0}^{k-1}F_{g^j}.
$$
(In particular, if $k$ is odd, i.e., if $p\not\equiv 1\mod 8$, then
$z(\gamma\tau)=\psi(\gamma)z(\tau)$ for $\gamma\in\Gamma_2(p)$.)
\end{Remark}

\vspace{3mm}
\noindent
{\it Acknowledgement.} 
The first author would like to thank 
Masataka Chida and Fu-Tsun Wei for fruitful discussion.
He is partially supported by JSPS KAKENHI Grant (18K03232).
The second author was partially supported by Grant
106-2115-M-002-009-MY3 of the Ministry of Science and Technology,
Republic of China (Taiwan).

The authors would like to thank the anonymous referee 
for the detailed comments.


\begin{thebibliography}{99}

\bibitem{KL}
Nicholas~M. Katz and Serge Lang.
\newblock Finiteness theorems in geometric classfield theory.
\newblock {\em Enseign. Math. (2)}, 27(3-4):285--319 (1982), 1981.
\newblock With an appendix by Kenneth A. Ribet.

\bibitem{Kubert-Lang}
Daniel~S. Kubert and Serge Lang.
\newblock {\em Modular units}, volume 244 of {\em Grundlehren der
  Mathematischen Wissenschaften [Fundamental Principles of Mathematical
  Science]}.
\newblock Springer-Verlag, New York, 1981.

\bibitem{Mazur}
Barry Mazur.
\newblock Modular curves and the {E}isenstein ideal.
\newblock {\em Inst. Hautes \'{E}tudes Sci. Publ. Math.}, (47):33--186 (1978),
  1977.

\bibitem{Schoeneberg}
Bruno Schoeneberg.
\newblock {\em Elliptic modular functions: an introduction}.
\newblock Springer-Verlag, New York, 1974.
\newblock Translated from the German by J. R. Smart and E. A. Schwandt, Die
  Grundlehren der mathematischen Wissenschaften, Band 203.

\bibitem{Serre}
Jean-Pierre Serre.
\newblock {\em Groupes alg\'{e}briques et corps de classes}.
\newblock Publications de l'institut de math\'{e}matique de l'universit\'{e} de
  Nancago, VII. Hermann, Paris, 1959.

\bibitem{serreLF}
Jean-Pierre Serre.
\newblock {\em Local fields}.
\newblock Graduate Texts in Mathematics, 67. Springer-Verlag, New York-Berlin, 1979.


\bibitem{Tate}
John Tate.
\newblock {\em Les conjectures de {S}tark sur les fonctions {$L$} d'{A}rtin en
  {$s=0$}}, volume~47 of {\em Progress in Mathematics}.
\newblock Birkh\"{a}user Boston, Inc., Boston, MA, 1984.
\newblock Lecture notes edited by Dominique Bernardi and Norbert Schappacher.

\bibitem{Yang}
Yifan Yang.
\newblock Transformation formulas for generalized {D}edekind eta functions.
\newblock {\em Bull. London Math. Soc.}, 36(5):671--682, 2004.

\end{thebibliography}
\end{document}